\title{
Finiteness of outer automorphism groups of random right-angled Artin groups
}
\author{Matthew B. Day}
\date{June 14, 2011}
\theoremstyle{plain}
\newtheorem{theorem}{Theorem}[section]
\newtheorem{proposition}[theorem]{Proposition}
\newtheorem{lemma}[theorem]{Lemma}
\newtheorem{corollary}[theorem]{Corollary} 
\newtheorem{claim}{Claim} 
\newtheorem*{claim*}{Claim} 
\newtheorem{fact}[theorem]{Fact} 
\theoremstyle{definition}
\theoremstyle{remark}
\newtheorem*{remark}{Remark}
\newcommand\co{\colon}
\newcommand\drop{\setminus}
\newcommand\abs[1]{\lvert #1\rvert}
\newcommand\R{\mathbb{R}}
\newcommand\Z{\mathbb{Z}}
\DeclareMathOperator{\GL}{GL}
\DeclareMathOperator{\st}{st}
\DeclareMathOperator{\lk}{lk}
\DeclareMathOperator{\prob}{Prob}
\DeclareMathOperator{\Aut}{Aut}
\DeclareMathOperator{\Out}{Out}
\newcommand\gnp{G(n,p)}
\newcommand\OAG{\Out(A_\Gamma)}
\newcommand\Expect{\mathbb E}
\newcommand{\wU}[2]{\widehat{U}_{(#1,#2)}}
\begin{document}

\maketitle

\begin{abstract}
We consider the outer automorphism group $\mathrm{Out}(A_\Gamma)$ of the right-angled Artin group $A_\Gamma$ of a random graph $\Gamma$ on $n$ vertices in the Erd\H os--R\'enyi model.
We show that the functions  $n^{-1}(\log(n)+\log(\log(n)))$ and $1-n^{-1}(\log(n)+\log(\log(n)))$ bound the range of edge probability functions for which $\mathrm{Out}(A_\Gamma)$ is finite:
if the probability of an edge in $\Gamma$ is strictly between these functions as $n$ grows, then asymptotically $\mathrm{Out}(A_\Gamma)$ is almost surely finite, and if the edge probability is strictly outside of both of these functions, then asymptotically $\mathrm{Out}(A_\Gamma)$ is almost surely infinite.
This sharpens results of Ruth Charney and Michael Farber from their preprint \emph{Random groups arising as graph products}, arXiv:1006.3378v1.
\end{abstract}

\section{Introduction}
Let $\Gamma$ be a simplicial graph with vertex set $V$.
The \emph{right-angled Artin group} $A_\Gamma$ defined by $\Gamma$ is the group with the presentation
\[\langle V |\text{$ab=ba$ for all $a,b\in V$ with $a$ adjacent to $b$}\rangle.\]
Right-angled Artin groups include free groups and free abelian groups and are common objects of study in geometric group theory.
Outer automorphism groups of right-angled Artin groups exhibit great variety: although they include infinite groups such as outer automorphism groups of free groups and $\GL(n,\Z)$, many of them are finite.

The theory of random graphs is a branch of combinatorics initiated by Erd\H os and R\'enyi in a 1959 paper~\cite{ErdosRenyi}.
Since right-angled Artin groups are indexed over graphs, it is natural to ask about the properties of random ones.
Random right-angled Artin groups were studied by Costa--Farber in~\cite{CostaFarber}, and their automorphism groups were specifically studied by Charney--Farber in~\cite{CharneyFarber}.
Charney and Farber showed that under certain conditions, a random right-angled Artin group almost certainly has a finite outer automorphism group; the results of this paper are a sharpening of their results.

\subsection{Background}\label{se:background}
This paper is about two graph-theoretic notions that arise in the study of right-angled Artin groups: \emph{domination} and \emph{star $2$--connectedness}.
Again let $\Gamma$ be a simplicial graph with vertex set $V$ and adjacency relation~$\sim$.
The \emph{star} of a vertex $a$ of $\Gamma$ is the set consisting of $a$ and all vertices adjacent to~$a$:
\[\st(a) = \{a\}\cup\{b\in V| b\sim a\}.\]
A vertex $a\in V$ is a \emph{star-cut-vertex} for $\Gamma$ if the full subgraph $\Gamma\drop\st(a)$ is disconnected.
The graph $\Gamma$ is \emph{star $2$--connected} if it has no star-cut-vertices.
If $\Gamma$ is star $2$--connected, then either $\Gamma$ is connected or $\Gamma$ is the disjoint union of exactly two complete graphs.
For a pair of distinct vertices $a,b\in V$, $a$ \emph{dominates} $b$ in $\Gamma$ if every vertex adjacent to $b$ is adjacent to or equal to $a$, in other words that
\[\st(b)\drop\{b\}\subset \st(a).\]
We write $a>b$ if $a$ dominates $b$, and refer to $(a,b)$ as a \emph{domination pair}.
Note that it is possible for $a$ to dominate $b$ whether $a\sim b$ or not; if $a\sim b$ then $(a,b)$ is an \emph{adjacent domination pair}, and otherwise it is a \emph{non-adjacent domination pair}.
A vertex is \emph{isolated} if it is adjacent to no other vertices, and it is \emph{central} if it is adjacent to all other vertices.
Note that if $a$ is isolated then $b>a$ for any $b$, and if $a$ is central, then $a>b$ for any $b$.
The following examples are instructive: a path with at least four vertices has exactly four domination pairs, and a path with at least five vertices is not star $2$--connected, but a cycle on at least five vertices is star $2$--connected and has no domination pairs.

The presence of domination pairs and star-cut-vertices indicate the existence of infinite order outer automorphisms of right-angled Artin groups.
M. Laurence showed in~\cite{Laurence} that the automorphism group $\Aut A_\Gamma$ of the right-angled Artin group $A_\Gamma$ of a finite graph $\Gamma$ is generated by finitely many automorphisms that fall into four classes: inversions, symmetries, dominated transvections, and partial conjugations.
While inversions and symmetries generate a finite subgroup of $\Aut A_\Gamma$, dominated transvections and partial conjugations are infinite order.
A dominated transvection always has an infinite-order image in the outer automorphism group $\Out A_\Gamma$, but a dominated transvection will only exist if $\Gamma$ has a domination pair.
If $\Gamma$ is star $2$--connected, then every partial conjugation is an inner automorphism; if there is a star-cut-vertex, then there is a partial conjugation whose image in $\Out A_\Gamma$ has infinite order.
We have explained the following fact.
\begin{fact}\label{fa:outfinite}
The group $\OAG$ is finite if and only if $\Gamma$ is star $2$--connected and has no domination pairs.
\end{fact}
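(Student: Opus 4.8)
The plan is to derive this from M.~Laurence's generating set for $\Aut A_\Gamma$ --- the four families of inversions, graph symmetries, dominated transvections, and partial conjugations --- treating the two implications separately. The finiteness direction uses the full generating set; the infiniteness direction needs no generating set, only the explicit construction of infinite-order outer automorphisms.

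For the direction ``$\Gamma$ star $2$--connected and domination-free $\implies \OAG$ finite'', I would first note that such a $\Gamma$ must be connected, since a star $2$--connected graph is either connected or a disjoint union of two complete graphs, and the latter always carries a domination pair (take two vertices in the same complete factor, or, when both factors are single points, those two points). Granting connectedness, apply Laurence's theorem. There are no dominated transvections, since these require a domination pair. Every partial conjugation is inner: a partial conjugation by a vertex $a$ conjugates the vertices lying in some union $C$ of connected components of $\Gamma\drop\st(a)$ and fixes the rest, but $\Gamma\drop\st(a)$ is connected, so either $C=\emptyset$ and the automorphism is trivial, or $C=V\drop\st(a)$ and the automorphism coincides with the inner automorphism by $a$ (which fixes $A_{\st(a)}$ pointwise because $a$ is central there). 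Hence $\OAG$ is a quotient of the group generated by the inversions and symmetries, which embeds in the finite group $\{\pm1\}^V\rtimes\Aut(\Gamma)$; so $\OAG$ is finite.

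For the contrapositive of the other direction, suppose $\Gamma$ has a domination pair or a star-cut-vertex. If $(a,b)$ is a domination pair, the dominated transvection $\tau\co b\mapsto ba$ (fixing all other vertices) is an automorphism whose action on the abelianization $H_1(A_\Gamma)\cong\Z^V$ is the elementary matrix $I+E_{a,b}$ of infinite order; since inner automorphisms act trivially on $H_1$, the representation $\Aut A_\Gamma\to\GL(\abs V,\Z)$ factors through $\OAG$, so $\tau$ has infinite order in $\OAG$. If instead $a$ is a star-cut-vertex, I would write $V\drop\st(a)=C\sqcup D$ with $C$ and $D$ nonempty unions of components and take $\pi$ to be the partial conjugation sending $c\mapsto aca^{-1}$ for $c\in C$ and fixing everything else (this respects the relations because every vertex of $\lk(a)$ commutes with $a$). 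The goal is then to show $\pi^k$ is not inner for any $k\neq 0$, so that $\pi$ has infinite order in $\OAG$ and $\OAG$ is infinite. This last point is the main obstacle; everything before it is formal.

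To carry it out I would argue by contradiction, using two standard facts about parabolic subgroups of right-angled Artin groups: Servatius' centralizer theorem, $Z(v)=A_{\st(v)}$, and the fact that an intersection of parabolics $A_S$ is again the parabolic on the intersection of the defining sets. Suppose $\pi^k=c_g$ with $k\neq0$. Evaluating on $a$ gives $g\in Z(a)=A_{\st(a)}$; evaluating on some $v\in D$ gives $g\in Z(v)=A_{\st(v)}$, and since $v\not\sim a$ the intersection $A_{\st(a)}\cap A_{\st(v)}$ is a parabolic missing the vertex $a$, so $a\notin\operatorname{supp}(g)$. Evaluating on a vertex $c\in C$ gives $a^kca^{-k}=gcg^{-1}$, that is $g^{-1}a^k\in Z(c)=A_{\st(c)}$; but the homomorphism $A_\Gamma\to\Z$ that sends $a$ to $1$ and every other vertex to $0$ is nonzero on $g^{-1}a^k$ (as $a\notin\operatorname{supp}(g)$), which forces $a\in\operatorname{supp}(g^{-1}a^k)\subseteq\st(c)$, contradicting $c\not\sim a$. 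The only delicate part of this is the bookkeeping with supports; the rest is routine once the parabolic-subgroup facts are in hand.
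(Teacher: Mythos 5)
Your proof is correct and follows essentially the same route the paper takes (and defers to Charney--Farber \S 6 for the details of): Laurence's generating set gives the finiteness direction once partial conjugations are seen to be inner, while the abelianization detects infinite order for dominated transvections and the centralizer/parabolic argument does so for a partial conjugation at a star-cut-vertex. The only minor remark is that your opening connectedness discussion is not actually needed, since star $2$--connectedness already says directly that $\Gamma\drop\st(a)$ is connected for every vertex $a$, which is all the partial-conjugation step uses.
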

For the details of this argument, we refer the reader to \S6 of Charney--Farber~\cite{CharneyFarber}.

To proceed, we must formalize our notion of random graphs.
The \emph{Erd\H os--R\'enyi model} for random graphs is the sequence of probability spaces $\gnp$, where
\begin{itemize}
\item $n$ varies over the positive integers,
\item $p=p(n)$ is a sequence of probability values in $[0,1]$,
\item the underlying set of $\gnp$ is the finite set of all simplicial graphs with vertex set $V$ of cardinality $|V|=n$,
\item each edge occurs with probability $p$ and independently of other edges.
\end{itemize}
It is easy to see that this last condition uniquely determines $\gnp$; it means that $\gnp$ assigns each graph $\Gamma$ with $m$ edges the probability
\[\prob(\Gamma)=p^m(1-p)^{n-m}.\]
Suppose we are given a sequence of probabilities $p$ and a property of graphs $P$.
We say that $\Gamma\in\gnp$ has the property $P$ \emph{asymptotically almost surely} (\emph{a.a.s.}) if the probability that $\Gamma\in\gnp$ has $P$ goes to $1$ as $n\to\infty$.
This model for random graphs and related models are described in detail in Chapter 2 of Bollob\'as~\cite{Bollobas}.

The work in this paper is inspired by the following results of Charney--Farber~\cite{CharneyFarber}:
\begin{theorem}[Charney--Farber]
Suppose $p$ is any function satisfying
\[p(1-p)n-2\ln(n)\to\infty \text{ as } n\to\infty,\]
(for example, if $p$ is constant in $n$ with $0<p<1$), then $\Gamma\in\gnp$ a.a.s.\ has no  domination pairs.
\end{theorem}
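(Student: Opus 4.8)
The plan is to run the first-moment method, i.e.\ a union bound. Let $X$ be the random variable counting the ordered domination pairs $(a,b)$ in $\Gamma\in\gnp$. Then $\Gamma$ has a domination pair if and only if $X\ge 1$, so by Markov's inequality $\prob(X\ge 1)\le\Expect[X]$, and it suffices to show $\Expect[X]\to 0$ as $n\to\infty$.

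To compute $\Expect[X]$ I would fix an ordered pair $(a,b)$ of distinct vertices and observe that $a>b$ holds exactly when, for every vertex $c\in V\drop\{a,b\}$, it is \emph{not} the case that $c\sim b$ and $c\not\sim a$; in particular the status of the edge between $a$ and $b$ plays no role, so there is no need to treat adjacent and non-adjacent domination pairs separately. For distinct $c$ these events concern disjoint pairs of potential edges ($\{c,b\}$ and $\{c,a\}$) and so are mutually independent, each having probability $p(1-p)$ of being the ``bad'' configuration. Hence $\prob(a>b)=(1-p(1-p))^{n-2}$, and by linearity of expectation $\Expect[X]=n(n-1)(1-p(1-p))^{n-2}$.

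It then remains to check that this tends to zero under the hypothesis. Taking logarithms and using the elementary bound $\ln(1-x)\le -x$ for $x\in[0,1)$ together with $p(1-p)\le \tfrac14$, one gets
\[
\ln\Expect[X]\le 2\ln n-(n-2)\,p(1-p)\le 2\ln n - n\,p(1-p)+\tfrac12 .
\]
The hypothesis says precisely that $p(1-p)n-2\ln n\to\infty$, so the right-hand side goes to $-\infty$; therefore $\Expect[X]\to 0$, which finishes the argument.

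I do not expect a genuine obstacle here: this is a routine union-bound estimate, and the only points that need a little care are the observation that the edge $ab$ does not enter the domination condition, the independence bookkeeping for the conditions indexed by $c$, and the fact that $p(1-p)$ need not be small (which is why one uses $\ln(1-x)\le -x$ rather than a first-order Taylor approximation). I would also note in passing that proving this bound on $p(1-p)$ is the actual threshold for the appearance of domination pairs would require a complementary second-moment computation, but no such sharpness is claimed in the statement as given.
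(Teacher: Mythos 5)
Your proposal is correct and follows essentially the same route as the paper and its source: a first-moment (union bound) computation in which $\prob(a>b)=(1-p(1-p))^{n-2}$, which is exactly the factor $(p+(1-p)^2)^{n-2}$ appearing in the paper's Proposition~\ref{pr:countnadp}, followed by the elementary estimate $\ln(1-x)\le -x$ to conclude $\Expect(X)\to 0$ under the stated hypothesis. Nothing further is needed.
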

\begin{theorem}[Charney--Farber]
Suppose $p$ is constant with respect to $n$ and
\[1-\frac{1}{\sqrt{2}} < p < 1,\]
then $\Gamma\in\gnp$ a.a.s.\ is star $2$--connected.
\end{theorem}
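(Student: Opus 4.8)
The plan is to bound from above the expected number of star-cut-vertices of $\Gamma\in\gnp$ and then apply Markov's inequality, so I want to show this expectation tends to $0$. By linearity of expectation and symmetry it equals $n$ times $\prob(a\text{ is a star-cut-vertex})$ for a fixed vertex $a$, so it suffices to prove $n\cdot\prob(a\text{ is a star-cut-vertex})\to 0$. The reduction that makes this manageable is the following: conditioning on which vertices constitute $\st(a)$, the full subgraph $\Gamma\drop\st(a)$ on the set of non-neighbors $N:=V\drop\st(a)$ is distributed exactly as an Erd\H os--R\'enyi graph $\gnthis{p}$ on $|N|$ vertices, and its conditional law depends only on $|N|$ (the edges inside $N$ are independent of the edges meeting $a$). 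Since $|N|$ is binomial with parameters $n-1$ and $q:=1-p$, and $a$ is a star-cut-vertex exactly when this auxiliary graph is disconnected, I can compute $\prob(a\text{ is a star-cut-vertex})$ by conditioning on $|N|$ and averaging.

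Two ingredients make the average explicit. The first is the pair of elementary binomial identities $\Expect[x^{|N|}]=(p+qx)^{n-1}$ and $\Expect[|N|\,x^{|N|}]=(n-1)qx\,(p+qx)^{n-2}$. The second is the standard estimate for a $k$-vertex Erd\H os--R\'enyi graph $\Gamma'$: since a disconnected $\Gamma'$ has a component of some size $j$ with $1\le j\le k/2$,
\[\prob(\Gamma'\text{ disconnected})\ \le\ \sum_{j=1}^{\lfloor k/2\rfloor}\binom{k}{j}q^{\,j(k-j)}\ \le\ k\,q^{\,k-1}+q^{-4}(2q^2)^k,\]
where the first term is the $j=1$ (isolated-vertex) contribution and for $2\le j\le k/2$ I used $j(k-j)\ge 2(k-2)$ together with $\sum_j\binom{k}{j}\le 2^k$. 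Plugging $k\,q^{k-1}$ into the second identity at $x=q$, plugging $q^{-4}(2q^2)^k$ into the first at $x=2q^2$, and multiplying by the $n$ choices of $a$, I would obtain
\[n\cdot\prob(a\text{ is a star-cut-vertex})\ \le\ n(n-1)\,q\,(p+q^2)^{n-2}\ +\ n\,q^{-4}(p+2q^3)^{n-1}.\]
Now $p+q^2=1-p(1-p)<1$ for every $p\in(0,1)$, so the first term goes to $0$; and the hypothesis $p>1-\tfrac{1}{\sqrt{2}}$ is precisely the statement $2q^2<1$, which forces $2q^3<q$ and hence $p+2q^3<p+q=1$, so the second term goes to $0$ as well. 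Markov's inequality then gives that $\Gamma$ is a.a.s.\ star $2$--connected. (Combined with the Charney--Farber theorem on domination pairs, whose hypothesis holds for constant $p$, and Fact~\ref{fa:outfinite}, this recovers the a.a.s.\ finiteness of $\OAG$ for constant $p$ in this range.)

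I expect the only genuinely delicate step to be the averaging over the random size $|N|$. One is tempted to argue ``$|N|$ concentrates near $(n-1)q$, where $\Gamma\drop\st(a)$ is a dense graph and hence almost surely connected,'' but the atypically small values of $|N|$, where disconnectedness is not rare, still have to be controlled; the point is that $p<1$ makes those values exponentially unlikely, and keeping the full expectation and invoking the two generating-function identities handles all scales of $|N|$ at once. I would also note that the constant $1-\tfrac{1}{\sqrt{2}}$ is not intrinsic: it is forced only by the crude bound $j(k-j)\ge 2(k-2)$, and replacing this with $j(k-j)\ge jk/2$ for $2\le j\le k/2$ improves the disconnection estimate to $\prob(\Gamma'\text{ disconnected})\le C_q\,k\,q^{k/2}$ for a constant $C_q$ depending only on $q$; then $\Expect[|N|\,q^{|N|/2}]=(n-1)q^{3/2}(p+q^{3/2})^{n-2}$ together with $p+q^{3/2}<1$ --- which holds for \emph{every} constant $p\in(0,1)$ --- shows that star-cut-vertices disappear a.a.s.\ for all constant $p$, and the refinements in the present paper push this much further, down to edge probabilities of order $n^{-1}\log n$.
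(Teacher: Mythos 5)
Your proof is correct, and it is worth pointing out that the paper itself does not prove this statement: it is quoted from Charney--Farber as motivation, and the paper's own machinery (Theorem~\ref{th:masterstarcut}, via Propositions~\ref{pr:smallstarseps}, \ref{pr:highpbound} and \ref{pr:lowpbound}) establishes a much sharper result that subsumes it. Your argument is a clean, self-contained first-moment proof in the same general spirit as the paper's --- both ultimately control the expected number of configurations $(a,S)$ in which a set $S$ is split off from the rest of $\Gamma\drop\st(a)$ --- but the bookkeeping is genuinely different. You condition on $\st(a)$, observe that $\Gamma\drop\st(a)$ is an Erd\H os--R\'enyi graph on a $\mathrm{Bin}(n-1,q)$ number of vertices, apply the classical disconnection bound $\sum_{j\le k/2}\binom{k}{j}q^{j(k-j)}$, and integrate out $|N|$ with binomial generating-function identities; the paper instead fixes the size $k=|S|$ of the split-off component, computes $\Expect(U_k)$ exactly for proper star $k$-separations (Lemma~\ref{le:countss}), and controls $\sum_k\Expect(U_k)$ by dominated convergence. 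The correspondence is visible in your $j=1$ contribution $n(n-1)q(p+q^2)^{n-2}$, which is literally the expected number of non-adjacent domination pairs from Proposition~\ref{pr:countnadp}; and your lumping of all $j\ge 2$ into $q^{-4}(2q^2)^{k}$ is the crude step that manufactures the constant $1-\tfrac{1}{\sqrt{2}}$, exactly as you diagnose. Your route buys brevity and transparency at constant $p$; the paper's per-$k$ analysis buys the threshold $n^{-1}(\log n+\log\log n)$. Your closing refinement, using $j(k-j)\ge jk/2$ to extend the conclusion to all constant $p\in(0,1)$, is also correct and is precisely the first step of that sharpening.
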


In this paper, we find sharper descriptions of the functions $p$ for which $\Gamma\in\gnp$ a.a.s.\ has no domination pairs and is star $2$--connected.
Further, we show that for $p$ outside of these ranges, the negations of these statements hold a.a.s.

\subsection{Statement of results}

Our two main theorems explain the asymptotically almost sure existence and nonexistence of domination pairs and star-cut-vertices.

\begin{theorem}\label{th:masterdomination}
Let $C>0$ be any fixed constant, and
suppose $p=p(n)$ is a sequence of probability values.
The existence of domination pairs in $\Gamma\in\gnp$, asymptotically almost surely, is summarized as follows:
\begin{itemize}
\item If $pn^2\to 0$, then there are no adjacent domination pairs.
\item If 
\[p<\frac{\log(n)+\log(\log(n)) - \omega(n)}{n}\]
for some sequence $\omega(n)$ with $\omega(n)\to+\infty$, then there are at least $C$ non-adjacent domination pairs.
\item If we still have $p<n^{-1}(\log(n)+\log(\log(n)) - \omega(n))$, but in addition we have $pn^2\to \infty$, 
then there are also at least $C$ adjacent domination pairs.
\item If
\[\frac{\log(n)+\log(\log(n)) + \omega_1(n)}{n} < p < 1- \frac{\log(n)+\log(\log(n)) + \omega_2(n)}{n},\]
for some sequences $\omega_1(n)$, $\omega_2(n)$, both tending to positive infinity, then there are no domination pairs.
\item If 
\[p > 1- \frac{\log(n)+\log(\log(n)) - \omega(n)}{n}\]
for some sequence $\omega(n)$ with $\omega(n)\to+\infty$, then there are at least $C$ adjacent domination pairs.
\item If we still have $p>1-n^{-1}\log(n)+\log(\log(n)) - \omega(n))$, but in addition we have $(1-p)n^2\to \infty$, 
then there are also at least $C$ non-adjacent domination pairs.
\item If $(1-p)n^2\to 0$, then there are no non-adjacent domination pairs.
\end{itemize}
\end{theorem}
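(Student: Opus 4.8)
The plan is to pair a first-moment (Markov) argument for the non-existence items with a second-moment (Chebyshev) argument for the existence items, after a complementation symmetry cuts down the bookkeeping. Writing $\bar\Gamma$ for the complement of $\Gamma$, one has $\lk_\Gamma(b) = V\drop\st_{\bar\Gamma}(b)$, and from this a short check shows that $(a,b)$ is a non-adjacent domination pair of $\Gamma$ precisely when $(b,a)$ is an adjacent domination pair of $\bar\Gamma$; in particular the property ``$\Gamma$ has no domination pairs'' is invariant under complementation. Since $\bar\Gamma\in\gnthis{1-p}$ when $\Gamma\in\gnp$, this identifies the first item with the last, derives the fifth and sixth from the second and third, and reduces the fourth to the range $p\le 1/2$. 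It therefore remains to prove: (i) if $pn^2\to 0$ then a.a.s.\ there are no adjacent domination pairs; (ii) if $p<n^{-1}(\log n+\log\log n-\omega(n))$ then a.a.s.\ there are at least $C$ non-adjacent domination pairs; (iii) if in addition $pn^2\to\infty$ then a.a.s.\ there are at least $C$ adjacent domination pairs; and (iv) if $n^{-1}(\log n+\log\log n+\omega_1(n))<p\le 1/2$ then a.a.s.\ there are no domination pairs.

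The basic first-moment input is the exact formula: for distinct $a,b$, the pair $(a,b)$ is a domination pair exactly when every $c\notin\{a,b\}$ with $c\sim b$ also has $c\sim a$, an event of probability $(1-p+p^2)^{n-2}$, and it is adjacent with conditional probability $p$. Item (i) follows at once, since the expected number of ordered adjacent domination pairs is $n(n-1)p(1-p+p^2)^{n-2}\le n^2p\to 0$. For item (iv) the expected number of domination pairs is $n(n-1)(1-p+p^2)^{n-2}$, and this does \emph{not} tend to $0$ near the lower threshold, because it is swamped by pairs whose dominated vertex has degree $0$ or $1$. I would instead invoke the sharp threshold for minimum degree two: since $\omega_1\to\infty$, a.a.s.\ $\Gamma$ has minimum degree at least $2$ (a standard second-moment computation; see Bollob\'as~\cite{Bollobas}), so it suffices to bound the expected number of domination pairs $(a,b)$ with $\deg(b)\ge 2$. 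Grouping that count by $\deg(b)$ and summing the explicit probabilities introduces, relative to the naive first moment, a factor that is $O(np^2)$ for the adjacent count and $O((np^2)^2)$ for the non-adjacent count; combined with the crude bound $(1-p+p^2)^{n-2}\le e^{-(n-2)p(1-p)}\le e^{-(n-2)p/2}$ (valid for $p\le 1/2$) and with $(n-2)p\ge\log n$, one gets expectations bounded by $O((np)^3e^{-np/2})$ and $O((np)^4e^{-np/2})$, both of which tend to $0$. Markov then completes (iv).

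For items (ii) and (iii) the second-moment method cannot be applied to the number of domination pairs directly: a single isolated vertex produces $n-1$ non-adjacent domination pairs, so the expectation of that count is inflated by a rare event and its variance is far too large. Instead I would exhibit domination pairs coming from vertices of small degree and apply the second moment method to the count of such vertices. For (ii): if $p\le n^{-1}(\log n-\log\log n)$, then the expected number of isolated vertices tends to $\infty$ and a standard second-moment estimate yields an isolated vertex a.a.s., which is dominated non-adjacently by each of the other $n-1>C$ vertices; if $n^{-1}(\log n-\log\log n)<p<n^{-1}(\log n+\log\log n-\omega(n))$, then the expected number of degree-one vertices tends to $\infty$ and the second moment method gives at least $C$ of them a.a.s., while the expected number of $K_2$-components tends to $0$, so a.a.s.\ the neighbour $c$ of each such degree-one vertex $b$ has $\deg(c)\ge 2$, whence $b$ is dominated non-adjacently by any neighbour of $c$ other than $b$; distinct $b$'s give distinct non-adjacent domination pairs. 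For (iii): with the extra hypothesis $pn^2\to\infty$ the expected number of degree-one vertices again tends to $\infty$ (this is exactly where $pn^2\to\infty$ is needed, namely when $p$ is only $n^{-2}$ times a slowly growing function), and the same hypothesis makes the variance bound go through; since a degree-one vertex $b$ with neighbour $c$ gives the adjacent domination pair $(c,b)$, we again obtain at least $C$ distinct adjacent domination pairs a.a.s.

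The step I expect to be the main obstacle is the first-moment estimate inside (iv). As noted, the naive first moment of \emph{all} domination pairs does not tend to $0$ near the threshold, so one cannot avoid first isolating and discarding the degenerate low-degree pairs (via the sharp minimum-degree-two result) and only then estimating what remains; and in that estimate one must carefully harvest the power of $p$ coming from the requirement that the dominated vertex has two neighbours, each forced to be a neighbour of the dominating vertex --- this extra power of $p$ is precisely what drags the residual expectation below $1$, and it must be shown to do so uniformly over the whole window $n^{-1}(\log n+\log\log n+\omega_1(n))<p\le 1/2$. A secondary, purely administrative point is to arrange the case divisions in (ii) and (iii) so that they genuinely cover the stated ranges of $p$ and so that the relevant expectations tend to infinity throughout each subcase.
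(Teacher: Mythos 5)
Your proposal is correct and reaches every item of the theorem; the duality reduction (your complementation observation is the paper's Lemma~\ref{le:adpnadp}) and the entire existence half (isolated vertices, degree-one vertices, exclusion of isolated edges, second-moment input from Bollob\'as) coincide with the paper's Proposition~\ref{pr:dominationexistence} and its supporting lemmas. Where you genuinely diverge is the nonexistence statement in the sparse critical window. The paper first runs the unrestricted first moment only for $p\geq 2n^{-1}(\log n+\omega)$ (Proposition~\ref{pr:firstnonexist}), and for the remaining sliver introduces a \emph{domination diamond} --- a $4$--cycle $a\sim b\sim c\sim d\sim a$ with $a>c$, $a\not\sim c$, $b\not\sim d$ --- proves the trichotomy ``$c$ isolated, or $c$ adjacently dominated, or a diamond exists,'' kills the diamond count by a first moment ($\Expect(W)\sim(np)^4e^{-np}$), and disposes of adjacent domination by dualizing back into the dense range. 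You instead invoke the classical minimum-degree-two threshold at $n^{-1}(\log n+\log\log n)$ and then bound the first moment restricted to pairs whose dominated vertex has degree at least two, harvesting a factor of $p^{2}$ per forced common neighbour; the resulting bounds $O((np)^{3}e^{-np/2})$ and $O((np)^{4}e^{-np/2})$ do check out, uniformly over the whole window up to $p=1/2$, via the union-over-pairs estimate $\sum_{j\geq 2}\binom{n-2}{j}p^{2j}(1-p)^{n-2-j}\leq\binom{n-2}{2}p^{4}(1-p+p^{2})^{n-4}$. Your route is structurally cleaner --- it makes transparent that $\log n+\log\log n$ is exactly the minimum-degree-two threshold and avoids the diamond configuration and the case split at $2n^{-1}(\log n+\omega)$ --- at the cost of importing that classical threshold result as a black box (for the direction you need, a.a.s.\ minimum degree $\geq 2$, a first moment actually suffices; the second moment is only needed below the threshold). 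The only points to tighten in a written version are the saturation of your ``factor $O((np^{2})^{2})$'' heuristic when $np^{2}\not\to 0$ (handled by the displayed inequality above) and an explicit citation for the minimum-degree threshold.
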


\begin{proof}
It is well known that if $pn^2\to 0$, then the probability that $\Gamma\in\gnp$ is the edgeless graph goes to $1$ (since $\Gamma$ has $\binom{n}{2}$ pairs of vertices, the probability that $\Gamma$ is the edgeless graph is $(1-p)^{n(n-1)/2}\sim e^{-pn(n-1)/2}$ as $n\to\infty$).
Adjacent domination pairs require the existence of edges, and therefore we have proven the first item.
The last item follows by a dual argument: if $(1-p)n^2\to 0$, then the probability that $\Gamma$ is the complete graph goes to $1$, but non-adjacent domination pairs require pairs of vertices with no edges between them.

We prove the items that assert existence in Proposition~\ref{pr:dominationexistence} below, and Theorem~\ref{th:dominationnonexistence} below covers the item asserting the nonexistence of domination pairs.
\end{proof}

\begin{theorem}\label{th:masterstarcut}
Suppose $p=p(n)$ is a sequence of probability values.
There are a.a.s.\ no star-cut-vertices in $\Gamma\in\gnp$ if 
\begin{itemize}
\item for some sequence $\omega(n)$ with $\omega(n)\to\infty$,
\[p > \frac{\log(n)+\log(\log(n))+\omega(n)}{n}\text{, and}\]
\item either $n(1-p)\to 0$ or $n(1-p)\to \infty$.
\end{itemize}
Further, if only the first hypothesis holds, then a.a.s.\ 
for any star-cut-vertex $a\in\Gamma$, 
there is at most one component of $\Gamma\setminus\st(a)$ with more than one vertex.
\end{theorem}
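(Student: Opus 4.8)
The plan is to prove the second assertion (``at most one big component'') first, as a self-contained lemma that uses only the first hypothesis, and then to deduce the non-existence of star-cut-vertices from it by splitting on the second hypothesis. Throughout I will freely use that the first hypothesis puts $p$ above the connectivity threshold of $\gnp$, so a.a.s.\ $\Gamma$ is connected with minimum degree at least $2$.

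For the second assertion I would use a first-moment bound. If $a$ is a star-cut-vertex and $\Gamma\setminus\st(a)$ has two components $C_{1},C_{2}$ each of size at least two, then $a$ is adjacent to no vertex of $C_{1}\cup C_{2}$, each $C_{i}$ is connected, there is no edge between $C_{1}$ and $C_{2}$, and, because each $C_{i}$ is a \emph{full} component, every vertex $w\notin C_{1}\cup C_{2}\cup\{a\}$ adjacent to $C_{1}\cup C_{2}$ is adjacent to $a$. The edge families ``inside $C_{1}$'', ``inside $C_{2}$'', ``$a$ to $C_{1}\cup C_{2}$'', ``$C_{1}$ to $C_{2}$'', and, for each remaining $w$, ``$w$ to $C_{1}\cup C_{2}\cup\{a\}$'', are pairwise disjoint, so on fixed vertex sets of sizes $k_{1},k_{2}$ the probability of this configuration factors as a product; summing $n\binom{n}{k_{1}}\binom{n}{k_{2}}$ times this product over all $k_{1},k_{2}\ge 2$, I would show the total tends to $0$. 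The only delicate range is $p$ near $n^{-1}\log n$ with $k_{1},k_{2}$ small: there one must keep the sharp exponent, using that the per-$w$ factor is $p+(1-p)^{k_{1}+k_{2}+1}=1-(k_{1}+k_{2})p+O((k_{1}+k_{2})^{2}p^{2})$, so the remaining vertices contribute $\exp\!\big(-(1-o(1))(k_{1}+k_{2})pn\big)$, which — since $pn>\log n+\log\log n+\omega(n)$ — beats the $n^{1+k_{1}+k_{2}}p^{k_{1}+k_{2}-2}$ arising from the number of configurations together with the internal spanning-tree factors, and the resulting geometric-type sum over $(k_{1},k_{2})$ converges to $0$. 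Large $k_{1},k_{2}$ are instead controlled by the factor $(1-p)^{k_{1}k_{2}}$ from the absence of $C_{1}$--$C_{2}$ edges, which is exponentially small in $n$; the intermediate regime, and the case of $p$ bounded away from the threshold, are easier. Making this sum converge, with the right constants near the threshold, is the main technical obstacle.

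For the first assertion, suppose first that $n(1-p)\to 0$. Then $p\to 1$, so the first hypothesis is automatic, and the expected number of cycles in the complement $\bar\Gamma$, which is $\sum_{k\ge 3}(n(1-p))^{k}/(2k)$ to leading order, tends to $0$; hence a.a.s.\ $\bar\Gamma$ is a forest. If $\bar\Gamma$ is a forest then for each vertex $v$ the set $N_{\bar\Gamma}(v)$ is independent in $\bar\Gamma$ (a triangle through $v$ would be a cycle), so $\Gamma[N_{\bar\Gamma}(v)]$ is a complete graph; since $V\setminus\st_{\Gamma}(v)=N_{\bar\Gamma}(v)$, the graph $\Gamma\setminus\st_{\Gamma}(v)$ is complete, in particular connected, and $v$ is not a star-cut-vertex. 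So a.a.s.\ $\Gamma$ has no star-cut-vertices.

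Now suppose $n(1-p)\to\infty$, still under the first hypothesis. By the second assertion, a.a.s.\ every star-cut-vertex $a$ has $\Gamma\setminus\st(a)$ equal to at most one component of size $\ge 2$ together with singletons; being disconnected, it has a singleton component $\{v\}$, which is exactly the statement that $(a,v)$ is a non-adjacent domination pair, and it has at least two components, which forces $\deg(a)\le n-3$. So it suffices to show that a.a.s.\ there is no non-adjacent domination pair $(a,v)$ with $\deg(a)\le n-3$. If $n(1-p)-\log n-\log\log n\to\infty$ this follows from Theorem~\ref{th:masterdomination} (using the first hypothesis as the lower bound on $p$), which gives a.a.s.\ no domination pairs at all. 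In the remaining range $p$ is within $n^{-1}(\log n+\log\log n+O(1))$ of $1$, so $\nu:=n(1-p)$ satisfies $\nu\to\infty$ and $\nu=O(\log n)$; writing $q=1-p$, a direct computation gives
\[
\prob\big((a,v)\text{ a non-adjacent domination pair with }\deg a\le n-3\big)
= q\big[(1-q+q^{2})^{n-2}-(1-q)^{n-2}\big]\ \sim\ \nu^{3}e^{-\nu}/n^{2},
\]
so the expected number of such pairs is $\sim\nu^{3}e^{-\nu}\to 0$ and a.a.s.\ there are none. (When $n(1-p)-\log n-\log\log n$ neither tends to $\infty$ nor stays $O(\log n)$ one splits $n$ into the two corresponding subsequences and applies the appropriate argument on each.) Combining the two cases, $\Gamma\in\gnp$ a.a.s.\ has no star-cut-vertices.
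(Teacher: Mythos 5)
Your architecture is genuinely different from the paper's and most of it is sound. The paper works with \emph{proper star $k$-separations}: it computes $\Expect(U_k)$ exactly (Lemma~\ref{le:countss}), extracts the crucial extra factors $kp^2$ and $(1-p)^{k+1}$ from the properness condition (Lemma~\ref{le:newbounds}), shows $\Expect(U_k)\to 0$ for each fixed $k$ (Proposition~\ref{pr:smallstarseps}), and then sums over $k$ up to $n/2$ via dominated convergence, which is what Propositions~\ref{pr:highpbound} and~\ref{pr:lowpbound} (the bulk of the section) exist to justify. You instead bound directly the expected number of triples $(a,C_1,C_2)$ with two components of size at least two, using connectivity (spanning-tree) factors and the $(1-p)^{k_1k_2}$ cross factor that the paper never needs. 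Several of your ingredients are cleaner than the paper's: the complement-is-a-forest argument for $n(1-p)\to 0$ is a nice replacement for the $k=1$, $nq\to 0$ case of Proposition~\ref{pr:smallstarseps}; and your reduction of the first assertion (given the second) to ``no non-adjacent domination pair $(a,v)$ with $\deg(a)\le n-3$'' is exactly a star $1$-separation in the paper's language, and your $\nu^3e^{-\nu}$ computation agrees with what Lemma~\ref{le:newbounds} gives for $U_1$. That whole chain is correct modulo the usual subsequence splitting, which you acknowledge.

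The one genuine shortfall is the second assertion, which is the load-bearing step: you describe the double sum over $(k_1,k_2)$, identify the correct dominant factors in each regime (the $\exp(-(1-o(1))(k_1+k_2)pn)$ term near the threshold, the $(1-p)^{k_1k_2}$ term for large components), but then state that making the sum converge ``is the main technical obstacle'' rather than carrying it out. This is precisely where the paper spends its effort, and your version is not obviously lighter: the spanning-tree factors $k_i^{k_i-2}p^{k_i-1}$ and the binomials must be balanced against $e^{-(k_1+k_2)np}$ uniformly over all $p$ above the threshold and all $2\le k_1,k_2$ with $k_1+k_2\le n-1$, including the awkward mixed regime where one $k_i$ is small and the other is of order $n$ (there $(1-p)^{k_1k_2}$ is only polynomially small when $p\sim n^{-1}\log n$, and you must lean on the per-$w$ factor instead). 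A rough check suggests the estimate does go through, so I would call this a deferral rather than an error, but as written the central inequality of the theorem is asserted, not proved.
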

The proof of this theorem appears in \S\,\ref{se:summedcounts} below.
If vertices $a,b,c\in V$ form an isolated triangle in the complement graph $\overline{\Gamma}$, then each of $a$, $b$ and $c$ is a star-cut-vertex.
Isolated triangles are only asymptotically forbidden if $np\to\infty$ or $np\to0$.
This fact is explained in Theorem~V.16 of Bollob\'as~\cite{Bollobas}.
In particular, the presence of isolated triangles in $\overline\Gamma$ explains the possibility of star-cut-vertices if the second hypothesis in Theorem~\ref{th:masterstarcut} fails.

The following corollary is the goal of the paper.
\begin{corollary}
If the probability sequence $p$ satisfies
\[\frac{\log(n)+\log(\log(n)) + \omega_1(n)}{n} < p < 1- \frac{\log(n)+\log(\log(n)) + \omega_2(n)}{n},\]
for some sequences $\omega_1,\omega_2$ limiting to $+\infty$,
then $\OAG$ is a.a.s.\ finite for $\Gamma\in\gnp$.

Conversely, if
\[p<\frac{\log(n)+\log(\log(n)) + \omega(n)}{n}\text{ or }p>1-\frac{\log(n)+\log(\log(n)) + \omega(n)}{n}\]
for some $\omega\to+\infty$, then $\OAG$ is a.a.s.\ infinite for $\Gamma\in\gnp$.
\end{corollary}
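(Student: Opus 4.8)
The plan is to read the corollary off the two master theorems via Fact~\ref{fa:outfinite}. That fact says $\OAG$ is finite exactly when $\Gamma$ is star $2$--connected and has no domination pairs; equivalently, $\OAG$ is infinite exactly when $\Gamma$ has a domination pair or a star-cut-vertex. So for the finiteness half I need to rule out both domination pairs and star-cut-vertices a.a.s., and for the infiniteness half it suffices to produce a single domination pair a.a.s.

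For the finiteness statement, assume $\frac{\log(n)+\log(\log(n)) + \omega_1(n)}{n} < p < 1- \frac{\log(n)+\log(\log(n)) + \omega_2(n)}{n}$ with $\omega_1,\omega_2\to+\infty$. The fourth item of Theorem~\ref{th:masterdomination} has precisely this pair of inequalities as its hypothesis, so a.a.s.\ $\Gamma$ has no domination pairs. For star-cut-vertices I would invoke Theorem~\ref{th:masterstarcut}: its first hypothesis holds with $\omega = \omega_1$, and its second holds because the upper bound on $p$ gives $n(1-p) > \log(n)+\log(\log(n))+\omega_2(n)\to+\infty$, hence $n(1-p)\to\infty$; so a.a.s.\ $\Gamma$ has no star-cut-vertices. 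A union bound on the two individually vanishing failure probabilities shows that a.a.s.\ $\Gamma$ is simultaneously star $2$--connected and free of domination pairs, and Fact~\ref{fa:outfinite} then gives that $\OAG$ is a.a.s.\ finite.

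For the converse, suppose $p$ is below the lower threshold by a divergent margin, i.e.\ $p < n^{-1}(\log(n)+\log(\log(n)) - \omega(n))$ for some $\omega\to+\infty$; the case where $1-p$ is below this threshold is handled dually. Applying the second item of Theorem~\ref{th:masterdomination} with $C=1$ produces, a.a.s., at least one non-adjacent domination pair, so by Fact~\ref{fa:outfinite} $\OAG$ is a.a.s.\ infinite; in the dual case one instead uses the fifth item to get an adjacent domination pair. If the hypothesis only guarantees that one of the two disjuncts holds after passing to each of two complementary subsequences of~$n$, one argues on each subsequence separately, since a property that holds a.a.s.\ along each piece of a finite partition of $\mathbb{N}$ holds a.a.s.\ along $\mathbb{N}$.

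Because Theorems~\ref{th:masterdomination} and~\ref{th:masterstarcut} carry all of the probabilistic content, no step here is a genuine obstacle; the one point worth checking carefully is that the strict upper bound on $p$ in the finiteness statement really does force $n(1-p)\to\infty$, so that we stay out of the regime excluded by the second hypothesis of Theorem~\ref{th:masterstarcut}, where isolated triangles in $\overline{\Gamma}$ would produce star-cut-vertices and could keep $\OAG$ infinite.
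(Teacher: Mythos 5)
Your argument is correct and is essentially the paper's own proof: both halves are read off Theorem~\ref{th:masterdomination} and Theorem~\ref{th:masterstarcut} via Fact~\ref{fa:outfinite}, and your explicit check that the upper bound on $p$ forces $n(1-p)\to\infty$ (so the second hypothesis of Theorem~\ref{th:masterstarcut} is met) is the one detail the paper leaves implicit. Note also that you correctly read the converse hypothesis as $p<n^{-1}(\log(n)+\log(\log(n))-\omega(n))$, which is what the existence items of Theorem~\ref{th:masterdomination} actually supply; the $+\omega(n)$ in the stated corollary appears to be a sign typo.
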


\begin{proof}
Theorem~\ref{th:masterdomination} explains that there are a.a.s.\ no domination pairs in the first case, and a.a.s.\ there exist some domination pairs in the second case.
Theorem~\ref{th:masterstarcut} implies that a.a.s.\ there are no star-cut-vertices in the first case above.
Then the corollary follows from Fact~\ref{fa:outfinite}.
\end{proof}

\begin{remark}
If $\Gamma$ has isolated vertices then $\OAG$ has a subgroup isomorphic to the automorphism group of a free group, and if $\Gamma$ has central vertices then $\OAG$ has a subgroup isomorphic to a general linear group over the integers.
By a famous theorem of Erd\H os and R\'enyi (see Theorem~\ref{th:connectivitythreshold}), $\Gamma$ will a.a.s.\ have isolated vertices if $p$ is less than $n^{-1}(\log(n)-\omega(n))$ and central vertices if $p$ is greater than $1-n^{-1}(\log(n)-\omega(n))$ for some $\omega\to\infty$.
However, there are two narrow ranges of probability functions, where $p$ or $1-p$ is between $n^{-1}(\log(n)+\omega_1(n))$ and $n^{-1}(\log(n)+\log(\log(n))-\omega_2(n))$ for any $\omega_1,\omega_2\to\infty$, such that $\Gamma$ and $\overline{\Gamma}$ are a.a.s.\ connected but $\OAG$ is a.a.s.\ infinite.
\end{remark}

We end this section with a corollary that gives some insight into the group theory of $\OAG$ in the case that $p$ does not go to zero quickly enough.
\begin{corollary}
If $p>n^{-1}(\log(n)+\log(\log(n))+\omega(n))$ for some $\omega\to\infty$, then a.a.s. $\Out(A_\Gamma)$ is generated by dominated tranvsections, symmetries, and inversions only---partial conjugations are unnecessary.
\end{corollary}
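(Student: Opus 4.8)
The plan is to run the argument behind Fact~\ref{fa:outfinite} one notch more carefully, using the refined structural conclusion of Theorem~\ref{th:masterstarcut} rather than full star $2$--connectedness. Recall from Laurence~\cite{Laurence} that $\Aut(A_\Gamma)$ is generated by inversions, symmetries, dominated transvections, and partial conjugations, so $\OAG$ is generated by the images of these four families. Since inner automorphisms are trivial in $\OAG$, it suffices to show that, a.a.s., every partial conjugation agrees modulo inner automorphisms with a product of dominated transvections. Note that our hypothesis on $p$ does not force $\Gamma$ to be star $2$--connected — we have no control on $1-p$, so the second hypothesis of Theorem~\ref{th:masterstarcut} may fail and isolated triangles in $\overline\Gamma$ may still produce star-cut-vertices — the point is rather that the partial conjugations coming from any surviving star-cut-vertices turn out to be expressible in terms of dominated transvections.

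The only probabilistic input is the last clause of Theorem~\ref{th:masterstarcut}, whose hypothesis is exactly $p>n^{-1}(\log(n)+\log(\log(n))+\omega(n))$: a.a.s., for every star-cut-vertex $a$ of $\Gamma$, at most one component of $\Gamma\drop\st(a)$ has more than one vertex. I would work with a fixed $\Gamma$ in this event. Write $\pi_{a,C}$ for the partial conjugation by $a$ supported on a component $C$ of $\Gamma\drop\st(a)$; it suffices to handle these, since a partial conjugation supported on a union of components is a product of commuting $\pi_{a,C}$'s. First suppose $\{v\}$ is a singleton component of $\Gamma\drop\st(a)$. Then every neighbour of $v$ lies in $\st(a)$ — otherwise $v$ would be joined to a vertex outside $\st(a)$, enlarging its component — and $v\not\sim a$, so $\lk(v)\subseteq\lk(a)$ and $a$ dominates $v$. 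Hence $v\mapsto va$ and $v\mapsto av$ are dominated transvections, and $\pi_{a,\{v\}}\co v\mapsto ava^{-1}$ is their composite $(v\mapsto av)\circ(v\mapsto va)^{-1}$, a product of dominated transvections. If $a$ is \emph{not} a star-cut-vertex, then $\Gamma\drop\st(a)$ is connected (possibly empty or a single vertex), so every partial conjugation based at $a$ is either $\mathrm{inn}_a$ (trivial in $\OAG$) or of the singleton type just treated.

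It remains to treat $\pi_{a,C_0}$ where $a$ is a star-cut-vertex and $C_0$ is its unique component with more than one vertex. Here $a$ need not dominate the vertices of $C_0$, so $\pi_{a,C_0}$ is not directly a product of transvections. The way out is the identity $\prod_C\pi_{a,C}=\mathrm{inn}_a$, conjugation by $a$: the factors pairwise commute, and their product sends each generator $b\notin\st(a)$ to $aba^{-1}$ and fixes $\st(a)$. Since $a$ is a star-cut-vertex there is at least one component besides $C_0$, necessarily a singleton, so modulo inner automorphisms $\pi_{a,C_0}=\prod_{\{v\}\ne C_0}\pi_{a,\{v\}}^{-1}$, a product of dominated transvections in $\OAG$ by the previous paragraph. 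This accounts for every partial conjugation, and with Laurence's theorem the corollary follows.

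The step I expect to be the main obstacle is this last one: the single ``large'' component $C_0$ is exactly where the transvection description breaks down, and rescuing it requires both the observation that the product of all partial conjugations at $a$ is inner and the input of Theorem~\ref{th:masterstarcut} forcing all the remaining components to be single vertices. Everything else is bookkeeping with Laurence's generators; the only point needing care is the convention for which transvections count as ``dominated'' (one-sided versus two-sided), but this is immaterial since $v\mapsto av$ and $v\mapsto va$ differ by the inversions $v\mapsto v^{-1}$, which are among the allowed generators.
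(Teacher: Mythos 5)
Your proposal is correct and follows essentially the same route as the paper: both arguments use the last clause of Theorem~\ref{th:masterstarcut} to reduce to a single large complementary component, observe that $a$ dominates every singleton component of $\Gamma\drop\st(a)$ so those partial conjugations are products of dominated transvections, and then absorb the one large component by composing with the inner automorphism $\mathrm{inn}_a$ (your identity $\prod_C\pi_{a,C}=\mathrm{inn}_a$ is exactly the paper's ``compose $\alpha$ with an inner automorphism so that it fixes $S_1$''). Your write-up is if anything slightly more explicit about the bookkeeping, but there is no substantive difference.
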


\begin{proof}
It is enough to explain why a partial conjugation can be expressed as a product of dominated transvections under this hypothesis.
As explained in Theorem~\ref{th:masterstarcut}, this hypothesis implies that a.a.s.\ the star of any star-cut-vertex in $\Gamma$ has at most one complementary component with more than one vertex.
Suppose $a\in\Gamma$ is a star-cut-vertex with this property, and $S\subset\Gamma$ is a union of complementary components of $\st(a)$.
The data of $a$ and $S$ determine a partial conjugation automorphism $\alpha\in\Aut(A_\Gamma)$, which is defined on generators of $A_\Gamma$ as follows:
\[\alpha(b)=\left\{\begin{array}{cc} aba^{-1} & b\in S \\ b & b\notin S.\end{array}\right.\]
If $b\in\Gamma$ is an isolated vertex in $\Gamma\setminus\st(a)$, then $a$ dominates $b$ and the dominated transvections multiplying $b$ by $a^{\pm1}$ on the right and on the left all exist; these are the four automorphisms that fix all generators other than $b$, but send $b$ to $a^{\pm1}b$ or $ba^{\pm1}$.
Let $S_1$ be the component of $\Gamma\setminus \st(a)$ with more than one vertex, if it exists.
If $\alpha$ does not fix $S_1$ (meaning that $S_1\subset S$), we can compose $\alpha$ with an inner automorphism to get an automorphism that does fix $S_1$.
In any case, the class of $\alpha$ in $\OAG$ is represented by an automorphism that conjugates certain generators by $a^{\pm1}$ and fixes the rest, and such that those generators that it does not fix are all dominated by $a$.
This automorphism is certainly a product of the dominated transvections given above.
\end{proof}
\subsection{Conventions}
We always use $\Gamma$ to denote a finite graph with vertex set $V$ and edge relation $\sim$.
Between functions, $\sim$ denotes asymptotic unity.
We use the notations $f\in O(g)$ and $f\lesssim g$ to indicate that eventually $f$ is less than a constant multiple of $g$, and we also use $O(f)$ to denote an unknown function asymptotically bounded by $f$.

\section{Domination pairs}
\subsection{Duality of domination pairs}
We will exploit the following connection between adjacent domination and non-adjacent domination.
The \emph{link} $\lk(a)$ of a vertex $a$ is $\st(a)\setminus\{a\}$; then $a>b$ if and only if $\lk(b)\subset\st(a)$.

\begin{lemma}\label{le:adpnadp}
For $a,b\in V$, we have $a>b$ in $\Gamma$ if and only if $b>a$ in the complement graph $\overline \Gamma$.
In particular, $\overline \Gamma$ has as many adjacent domination pairs as $\Gamma$ has non-adjacent ones, and vice versa.
\end{lemma}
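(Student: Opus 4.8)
The plan is to unwind the definition of domination into a statement quantified over ``third'' vertices and then observe that passing to the complement exchanges the hypothesis and conclusion of that statement via contraposition. First I would record the combinatorial reformulation: for distinct $a,b\in V$, we have $a>b$ in $\Gamma$ if and only if for every vertex $c\in V\drop\{a,b\}$, the implication $c\sim b\Rightarrow c\sim a$ holds. This is immediate from $a>b\iff\lk(b)\subset\st(a)$: the vertices of $\lk(b)$ are exactly those $c\neq b$ with $c\sim b$, and $\st(a)=\{a\}\cup\lk(a)$, so the only content of the containment is that every such $c$ other than $a$ itself is adjacent to $a$. The cases $c=a$ and $c=b$ impose nothing, since $a\in\st(a)$ always and $b\notin\lk(b)$.

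Next I would apply this reformulation in $\overline{\Gamma}$. Since for distinct vertices $c$ and $d$ we have $c\sim d$ in $\overline{\Gamma}$ if and only if $c\not\sim d$ in $\Gamma$, the statement ``$b>a$ in $\overline{\Gamma}$'' unwinds to: for every $c\in V\drop\{a,b\}$, $c\not\sim a\Rightarrow c\not\sim b$ (adjacency now meaning adjacency in $\Gamma$). This is precisely the contrapositive of the condition ``$a>b$ in $\Gamma$'' obtained above, so the two are equivalent, proving the first assertion.

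For the ``in particular'' clause I would use the map $(a,b)\mapsto(b,a)$. If $(a,b)$ is an adjacent domination pair of $\Gamma$, then $a\sim b$ in $\Gamma$ gives $a\not\sim b$ in $\overline{\Gamma}$, and by the first part $b>a$ in $\overline{\Gamma}$, so $(b,a)$ is a non-adjacent domination pair of $\overline{\Gamma}$; thus this map takes adjacent domination pairs of $\Gamma$ to non-adjacent domination pairs of $\overline{\Gamma}$. Running the same argument with $\overline{\Gamma}$ in place of $\Gamma$ and using $\overline{\overline{\Gamma}}=\Gamma$ shows $(b,a)\mapsto(a,b)$ is a two-sided inverse, so the map is a bijection; hence $\overline{\Gamma}$ has exactly as many non-adjacent domination pairs as $\Gamma$ has adjacent ones, and swapping the roles of $\Gamma$ and $\overline{\Gamma}$ gives the reverse count.

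There is no genuinely hard step here; the only thing requiring care is the bookkeeping of the boundary cases $c\in\{a,b\}$ in the reformulation, which must be handled explicitly so that the contraposition in the complement is exact rather than merely ``morally'' correct.
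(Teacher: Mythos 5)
Your proof is correct and is essentially the paper's argument: the paper phrases it as the set-complement identity $\lk_{\overline{\Gamma}}(a)=V\setminus\st_\Gamma(a)$ and $\st_{\overline{\Gamma}}(b)=V\setminus\lk_\Gamma(b)$, while you give the equivalent element-wise contrapositive over third vertices $c$. The counting clause is also handled the same way, via the bijection $(a,b)\mapsto(b,a)$.
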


\begin{proof}
We add subscripts to our notations for stars and links to make clear which graph we are taking these stars and links in.
Of course, $a>b$ in $\Gamma$ if and only if $\lk_\Gamma(b)\subset\st_\Gamma(a)$.
Note that $\lk_{\overline{\Gamma}}(a)=V\setminus\st_\Gamma(a)$, and $\st_{\overline{\Gamma}}(b)=V\setminus \lk_\Gamma(b)$.
Then $\lk_{\overline{\Gamma}}(a)\subset\st_{\overline{\Gamma}}(b)$, which proves the lemma.
\end{proof}

\subsection{Existence results}
Our existence results follow well-known facts about random graphs by using some straightforward deductions.
The following statement is taken from Bollob\'as~\cite{Bollobas}, Theorem~III.1, and incorporates a comment preceding that theorem.
\begin{theorem}\label{th:manyvalence1}
Let $C>0$ be fixed.
If $p$ is a sequence of probabilities such that $pn^2\to\infty$ and $pn^{3/2}\to 0$, or if $p>\epsilon n^{-3/2}$ for some $\epsilon>0$ and
\begin{equation}\label{eq:v1condition} n(n-1)p(1-p)^{n-2}\to \infty\text{ as }n\to\infty,\end{equation}
then $\Gamma\in\gnp$ a.a.s.\ has at least $C$ vertices of valence $1$.
\end{theorem}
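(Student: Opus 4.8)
The plan is to apply the second moment method to the random variable $X$ counting the vertices of valence exactly $1$ in $\Gamma\in\gnp$. Write $X=\sum_{v\in V}\mathbf 1_v$, where $\mathbf 1_v$ is the indicator of the event that $v$ has exactly one neighbor. A fixed vertex has $n-1$ potential neighbors, so $\prob(\mathbf 1_v=1)=(n-1)p(1-p)^{n-2}$ and hence $\Expect[X]=n(n-1)p(1-p)^{n-2}$. The first step is to extract from the hypotheses the three facts $\Expect[X]\to\infty$, $pn^2\to\infty$, and $p\to 0$. Under the second hypothesis, $\Expect[X]\to\infty$ is exactly~\eqref{eq:v1condition}; also $pn^2>\epsilon n^{1/2}\to\infty$; and $p\not\to 0$ is impossible, since a subsequence along which $p$ stayed bounded away from $0$ would force $(1-p)^{n-2}$ to decay exponentially there, contradicting~\eqref{eq:v1condition}. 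Under the first hypothesis, $pn^{3/2}\to 0$ gives $pn\to 0$, hence $(1-p)^{n-2}\to 1$, so $\Expect[X]\sim pn^2\to\infty$, and the other two facts are immediate.

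The main work is the second moment estimate. Split $\Expect[X^2]=\Expect[X]+\sum_{u\neq v}\prob(\mathbf 1_u=1,\ \mathbf 1_v=1)$, and compute the joint probability for a fixed ordered pair $u\neq v$ by conditioning on whether the edge $uv$ is present. If $uv$ is present (probability $p$), then $\mathbf 1_u=\mathbf 1_v=1$ forces neither $u$ nor $v$ to meet any of the remaining $n-2$ vertices, an event of probability $(1-p)^{2(n-2)}$ because it constrains two disjoint sets of $n-2$ edges each (both disjoint from $uv$); if $uv$ is absent (probability $1-p$), then $\mathbf 1_u=\mathbf 1_v=1$ requires each of $u,v$ to have exactly one neighbor among the remaining $n-2$ vertices, two independent events of probability $(n-2)p(1-p)^{n-3}$ each. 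Summing over ordered pairs,
\[\Expect[X(X-1)]=n(n-1)p(1-p)^{2n-4}+n(n-1)(n-2)^2p^2(1-p)^{2n-5}.\]
Dividing by $\Expect[X]^2=n^2(n-1)^2p^2(1-p)^{2n-4}$: the second summand yields $(n-2)^2/\big(n(n-1)(1-p)\big)\to 1$ because $p\to 0$, and the first summand yields $1/\big(n(n-1)p\big)\to 0$ because $pn^2\to\infty$. Since also $\Expect[X]=o(\Expect[X]^2)$, we conclude $\Expect[X^2]=(1+o(1))\Expect[X]^2$, that is, $\Variance[X]=o(\Expect[X]^2)$.

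The conclusion then follows from Chebyshev's inequality: once $n$ is large enough that $\Expect[X]>2C$,
\[\prob(X<C)\le\prob\big(\lvert X-\Expect[X]\rvert>\tfrac12\Expect[X]\big)\le\frac{4\,\Variance[X]}{\Expect[X]^2}\longrightarrow 0,\]
so $\Gamma\in\gnp$ a.a.s.\ has at least $C$ vertices of valence $1$. I expect the only genuinely delicate point to be the first summand of $\Expect[X(X-1)]$, which counts pairs of mutually adjacent valence-$1$ vertices --- that is, $K_2$ connected components of $\Gamma$ --- whose total contribution is comparable to $\Expect[X]^2$ unless $pn^2\to\infty$; correspondingly, the substance of the first step is checking that $pn^2\to\infty$ and $p\to 0$ follow from \emph{each} of the two hypotheses, not merely from the more restrictive one.
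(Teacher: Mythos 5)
Your argument is correct, but there is nothing in the paper to compare it against: the paper does not prove this statement at all, it imports it verbatim from Bollob\'as (\emph{Random Graphs}, Theorem~III.1 and the remark preceding it), where it is obtained as a corollary of a much stronger distributional result --- the number of vertices of a fixed degree is asymptotically Poisson or normal, proved by the method of moments. Your second-moment computation is the lightweight substitute that suffices for the qualitative conclusion ``at least $C$ such vertices a.a.s.'' I checked the details: the reduction of both alternative hypotheses to the three facts $\Expect[X]\to\infty$, $pn^2\to\infty$, $p\to 0$ is sound (in particular the subsequence argument ruling out $p\not\to 0$ under \eqref{eq:v1condition}, and the observation that $pn^{3/2}\to 0$ forces $(1-p)^{n-2}\to 1$ so that $\Expect[X]\sim pn^2$); the conditioning on the edge $uv$ gives exactly
\[
\Expect[X(X-1)]=n(n-1)p(1-p)^{2n-4}+n(n-1)(n-2)^2p^2(1-p)^{2n-5},
\]
and the two ratios against $\Expect[X]^2$ are $1/\big(n(n-1)p\big)\to 0$ and $(n-2)^2/\big(n(n-1)(1-p)\big)\to 1$, so $\Variance[X]=o(\Expect[X]^2)$ and Chebyshev finishes. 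Your closing remark correctly identifies where each hypothesis is actually used: $pn^2\to\infty$ kills the contribution of isolated edges (adjacent pairs of valence-$1$ vertices), and $p\to 0$ is what makes the non-adjacent pairs asymptotically uncorrelated. The only thing your proof does not recover is the finer information in Bollob\'as's version (the limiting distribution), but none of that is needed anywhere in this paper, so your argument would serve as a self-contained replacement for the citation.
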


We note an easy corollary of this.
\begin{corollary}\label{co:manyvalence1}
If $pn^2\to\infty$ and $p < n^{-1}(\log(n)+\log(\log(n))-\omega(n))$ for some $\omega(n)$ tending to positive infinity, 
then $\Gamma\in\gnp$ a.a.s.\ has at least $C$ vertices of valence $1$ for any fixed $C$.
\end{corollary}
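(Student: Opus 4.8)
The plan is to reduce directly to Theorem~\ref{th:manyvalence1} by determining which of its two clauses applies to the given sequence $p$. Two preliminary observations help: since $p<n^{-1}(\log(n)+\log(\log(n))-\omega(n))$ and the right-hand side tends to $0$, we have $p\to 0$; and we may assume $\omega(n)\le\sqrt{\log n}$, since replacing $\omega$ by $\min(\omega,\sqrt{\log n})$ only weakens the hypothesis on $p$ while preserving $\omega\to+\infty$, and this forces $\omega(n)=o(\log n)$.

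The heart of the matter is to establish condition~\eqref{eq:v1condition}, that is, $n(n-1)p(1-p)^{n-2}\to\infty$. Because $p\to0$ and $np^2\le(\log(n)+\log(\log(n)))^2/n\to0$, one gets $(n-2)\log(1-p)=-np+o(1)$, hence $(1-p)^{n-2}\sim e^{-np}$ and $n(n-1)p(1-p)^{n-2}\sim(n-1)(np)e^{-np}$; it therefore suffices to show the latter tends to infinity. I would split on the size of $np$. The function $t\mapsto te^{-t}$ is decreasing on $[1,\infty)$, so when $np\ge1$ one may replace $np$ in $(np)e^{-np}$ by its upper bound $\log(n)+\log(\log(n))-\omega(n)$; since $e^{-\log(n)-\log(\log(n))}=(n\log n)^{-1}$ cancels against $n-1$ and against the coefficient $\log(n)+\log(\log(n))-\omega(n)\sim\log n$, this leaves $(n-1)(np)e^{-np}\ge(1+o(1))e^{\omega(n)}\to+\infty$. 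When $np<1$, instead $e^{-np}\ge e^{-1}$, so $(n-1)(np)e^{-np}\ge e^{-1}(1+o(1))\,pn^2\to+\infty$ by hypothesis. This proves~\eqref{eq:v1condition}.

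It remains to handle the dichotomy in Theorem~\ref{th:manyvalence1}, which turns on whether $pn^{3/2}$ is small. By the standard subsequence principle it is enough to show that every subsequence of $n$ has a further subsequence along which $\Gamma\in\gnp$ a.a.s.\ has at least $C$ vertices of valence~$1$; given a subsequence, pass to a further one along which $pn^{3/2}$ converges to some $\ell\in[0,+\infty]$. If $\ell=0$, then along this subsequence $pn^{3/2}\to0$ while $pn^2\to\infty$, so the first clause of Theorem~\ref{th:manyvalence1} applies. If $\ell>0$, then eventually $p>(\ell/2)n^{-3/2}$ along the subsequence, and \eqref{eq:v1condition}, established above, still holds along it, so the second clause applies. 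In either case we obtain at least $C$ vertices of valence~$1$ a.a.s., which completes the proof.

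I expect the estimate $n(n-1)p(1-p)^{n-2}\to\infty$ to be the only genuine obstacle: the non-monotonicity of $t\mapsto te^{-t}$ forces the case split into $np\ge1$ and $np<1$, and it is precisely the range $np\ge1$ where the threshold $n^{-1}(\log(n)+\log(\log(n))-\omega(n))$ is needed rather than the bare connectivity threshold $n^{-1}(\log(n)-\omega(n))$ --- the extra $\log(\log(n))$ is exactly what keeps $(np)e^{-np}$ above $(n\log n)^{-1}$ so that the product diverges. The subsequence bookkeeping in the last step is routine.
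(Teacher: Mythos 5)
Your proof is correct and follows essentially the same route as the paper: both reduce to Theorem~\ref{th:manyvalence1} by splitting into subsequences according to the size of $pn^{3/2}$ and verifying condition~\eqref{eq:v1condition} via the asymptotic $n(n-1)p(1-p)^{n-2}\sim n^2pe^{-np}$ combined with the upper bound $np\le\log(n)+\log(\log(n))-\omega(n)$. The only organizational difference is that you establish~\eqref{eq:v1condition} once for the whole sequence (splitting on $np\ge 1$ versus $np<1$ and using monotonicity of $t\mapsto te^{-t}$), whereas the paper verifies it separately on two subsequences distinguished by whether $p$ exceeds $(1/4)n^{-1}\log(n)$; the underlying estimate is the same.
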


\begin{proof}
We break the sequence $\{(n,p(n))\}_n$ into three subsequences.
The first one satisfies $pn^{3/2}\to 0$,
the second satisfies $p>\epsilon n^{-3/2}$ for some $\epsilon>0$ and $p<n^{-1}((1/2)\log(n)-\omega_1(n))$ for some $\omega_1(n)$ with $\omega_1(n)\to\infty$,
and the third satisfies $p> (1/4)n^{-1}\log(n)$.
The probability that $\Gamma$ has at least $C$ vertices of valence $1$ goes to $1$ on all three subsequences;
it does so on the first one because it falls under the first clause of Theorem~\ref{th:manyvalence1} and it does so on the other subsequences by the second clause of that theorem, as we now show.
Since $p\to 0$, we know $(1-p)^{n-2}\sim e^{-np}$; then the limit in Equation~\eqref{eq:v1condition} is asymptotically equivalent to $n^2pe^{-np}$.
By substituting our bounds for $p$ in the second subsequence, we get a lower bound on this limit:
\[n^2pe^{-np} > \epsilon n^{1/2} \cdot n^{-1/2}e^{\omega_1(n)} .\]
We do the same for the third subsequence:
\[n^2pe^{-np} > (1/4) n\log(n) \cdot n^{-1}\log(n)^{-1}e^{\omega(n)}.\]
These lower bounds go to infinity, so the theorem applies.
\end{proof}

The next statement is from Bollob\'as~\cite{Bollobas}, Theorem~V.4.
An \emph{isolated edge} is one both of whose endpoints have valence $1$.
\begin{theorem}\label{th:noisoedges}
Fix $C>0$.
If $2np-\log(n)-\log(\log(n))\to\infty$, then $\Gamma\in\gnp$ a.a.s.\ does not have any isolated edges.
\end{theorem}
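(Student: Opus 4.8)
The plan is to prove Theorem~\ref{th:noisoedges} by the first moment method. Let $X = X(\Gamma)$ be the number of isolated edges of $\Gamma\in\gnp$. It suffices to show $\Expect[X]\to 0$, because then Markov's inequality gives $\prob(X\geq 1)\leq \Expect[X]\to 0$, which is exactly the statement that a.a.s.\ there are no isolated edges. (The constant $C$ in the statement plays no essential role: we get $\prob(X\geq C)\to 0$ for free, and in fact for $C=1$.)

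First I would compute $\Expect[X]$ exactly. For a fixed pair $\{a,b\}$ of vertices, the event that $\{a,b\}$ is an isolated edge demands that the edge $ab$ be present and that all $2(n-2)$ potential edges joining $a$ or $b$ to $V\drop\{a,b\}$ be absent; these $1+2(n-2)$ edges are distinct, so the event has probability $p(1-p)^{2(n-2)}$. Summing over the $\binom{n}{2}$ pairs and using the uniform bound $1-p\leq e^{-p}$ (together with $e^{4p}\leq e^4$) gives
\[\Expect[X] = \binom{n}{2}\,p\,(1-p)^{2(n-2)} \leq \tfrac{1}{2}\,n^2 p\, e^{-2(n-2)p} \lesssim n^2 p\, e^{-2np}.\]
Next I would feed in the hypothesis. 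Set $\omega(n) := 2np-\log(n)-\log(\log(n))$, so that $\omega(n)\to\infty$ and $e^{-2np} = (n\log n)^{-1} e^{-\omega(n)}$; hence
\[\Expect[X] \lesssim \frac{np}{\log n}\, e^{-\omega(n)} = \frac{\log(n)+\log(\log(n))+\omega(n)}{2\log n}\, e^{-\omega(n)}.\]
Splitting the numerator into its three summands, the first two contribute $O(e^{-\omega(n)})$ and the third contributes $O(\omega(n)e^{-\omega(n)})$; since $t e^{-t}\to 0$ as $t\to\infty$, all three terms vanish, so $\Expect[X]\to 0$ and we are done.

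The main obstacle, such as it is, will be bookkeeping over the full range of $p$: the hypothesis does not force $\omega(n)$ to grow slowly, and when $p$ is bounded away from $0$ one has $\omega(n)$ of order $n$. The fix is already built into the estimate above: the inequality $1-p\leq e^{-p}$ is uniform, so the polynomial prefactor $np$ is always beaten by $e^{-\omega(n)}$ decay (in the extreme case $p\geq \tfrac12$ one can even forget the substitution and just observe that $(1-p)^{2(n-2)}$ decays exponentially, directly killing the $n^2$ factor). A minor point worth being careful about is the step $(1-p)^{2(n-2)}\leq e^{-2np}e^{4p} = O(e^{-2np})$, which is what lets me replace $n-2$ by $n$ in the exponent at no cost; everything else is a routine application of $te^{-t}\to 0$.
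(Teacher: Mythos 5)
Your proof is correct. Note that the paper does not prove this statement at all --- it imports it as Theorem~V.4 of Bollob\'as --- so your first-moment computation fills in exactly what the citation covers, and it is the standard argument: $\Expect[X]=\binom{n}{2}p(1-p)^{2(n-2)}\lesssim n^2pe^{-2np}$, and the hypothesis $2np=\log(n)+\log(\log(n))+\omega(n)$ with $\omega\to\infty$ makes this $O\bigl(\omega(n)e^{-\omega(n)}\bigr)\to 0$ uniformly over the whole range of $p$. Your observation that the dangling constant $C$ in the statement is vacuous (it is an artifact of the surrounding theorems, which do use such a constant) is also accurate.
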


From these we deduce the following:
\begin{proposition}\label{pr:v1notiso}
Let $C>0$ be fixed.
If $p$ is in the range
\[\frac{\log(n)+\log(\log(n))+\omega_1(n)}{2n} < p < \frac{\log(n)+\log(\log(n))-\omega_2(n)}{n},\]
for some sequences $\omega_1(n),\omega_2(n)$ approaching positive infinity,
then $\Gamma\in\gnp$ a.a.s.\ has at least $C$ vertices of valence $1$ that are not on isolated edges.
\end{proposition}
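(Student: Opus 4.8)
The plan is to get the valence-$1$ vertices from Corollary~\ref{co:manyvalence1} and to rule out isolated edges using Theorem~\ref{th:noisoedges}: the stated window for $p$ is precisely the overlap of the hypotheses of these two results, so the proposition will follow by intersecting the two ``asymptotically almost surely'' statements.

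First I would verify the hypotheses of both inputs in the given range. For Corollary~\ref{co:manyvalence1} we need $pn^2\to\infty$ and $p<n^{-1}(\log(n)+\log(\log(n))-\omega(n))$ for some $\omega\to\infty$; the latter holds with $\omega=\omega_2$, which is exactly the upper bound in the statement, and the former holds because the lower bound forces $p>\log(n)/(2n)$, whence $pn^2>n\log(n)/2\to\infty$. For Theorem~\ref{th:noisoedges} we need $2np-\log(n)-\log(\log(n))\to\infty$, and the lower bound on $p$ gives exactly $2np-\log(n)-\log(\log(n))>\omega_1(n)\to\infty$. So a.a.s.\ $\Gamma\in\gnp$ has at least $C$ vertices of valence $1$, and a.a.s.\ $\Gamma\in\gnp$ has no isolated edges at all.

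Next I would combine the two events. Since a finite intersection of events that each hold a.a.s.\ again holds a.a.s., we have a.a.s.\ that $\Gamma$ simultaneously has at least $C$ vertices of valence $1$ and has no isolated edges. A valence-$1$ vertex lies on an isolated edge precisely when its unique neighbour also has valence $1$; as there are no isolated edges, the unique neighbour of every valence-$1$ vertex has valence at least $2$, so none of the (at least $C$) valence-$1$ vertices lies on an isolated edge. This gives the proposition.

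I do not expect a real obstacle here: the argument is a synthesis of two quoted facts about random graphs, and the only point requiring care is the bookkeeping showing that the probability window simultaneously satisfies $pn^2\to\infty$, the upper bound needed for Corollary~\ref{co:manyvalence1}, and the lower bound needed for Theorem~\ref{th:noisoedges}---which it does essentially by construction of the window. One could also avoid the full strength of Theorem~\ref{th:noisoedges} by extracting $2C$ valence-$1$ vertices and bounding the number of isolated edges from above, but the direct route above is cleaner in this range.
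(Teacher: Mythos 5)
Your proposal is correct and follows essentially the same route as the paper: it invokes Corollary~\ref{co:manyvalence1} for the valence-$1$ vertices and Theorem~\ref{th:noisoedges} to exclude isolated edges, then combines the two a.a.s.\ events (the paper phrases this as the union bound $\prob(R)\leq\prob(S)+\prob(T)$ for the complementary events, which is the same thing). Your verification of the hypotheses in the given window is accurate and slightly more explicit than the paper's.
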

\begin{proof}
If $\Gamma$ has less than $C$ vertices of valence $1$ that are not on isolated edges, then either some of the vertices of valence $1$ in $\Gamma$ are on isolated edges, or $\Gamma$ has less than $C$ vertices of valence $1$.
Let $R$ denote the event that $\Gamma$ has less than $C$ vertices of valence $1$ that are not on isolated edges, let $S$ denote the event that there are less than $C$ vertices of valence $1$, and let $T$ denote the event that there are some isolated edges.
Then $R\subset S\cup T$.
Any $p$ in the given range certainly satisfies the hypotheses of Theorem~\ref{th:noisoedges}, by the choice of lower bound.
Also, $p$ satisfies the hypotheses of Corollary~\ref{co:manyvalence1}.
So we know $\prob(S)$ goes to $0$ and $\prob(T)$ goes to $0$.
Then since
\[0 \leq \prob(R) \leq \prob(S) + \prob(T)\]
we have $\prob(R)\to 0$.
\end{proof}

We need one more classical result, due to Erd\H os and R\'enyi~\cite{ErdosRenyi}.
A reference is Bollob\'as~\cite[Theorem~V.3, Theorem~VII.3]{Bollobas}.
\begin{theorem}[Erd\H os--R\'enyi]
\label{th:connectivitythreshold}
If 
\[p<\frac{\log(n)-\omega(n)}{n}\]
for some $\omega(n)\to\infty$, then a.a.s.\ $\Gamma\in\gnp$ has at least $C$ isolated vertices for any fixed $C>0$.

If 
\[p>\frac{\log(n)+\omega(n)}{n}\]
for some $\omega(n)\to\infty$, then a.a.s\ $\Gamma\in\gnp$ is connected.
\end{theorem}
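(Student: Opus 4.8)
The plan is to prove the two halves separately: the first by the second moment method, the second by a union bound over potential small components. Both are classical (which is why the paper just cites Bollob\'as~\cite{Bollobas}), but here is the argument I would give.

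For the first statement, let $X$ count the isolated vertices of $\Gamma\in\gnp$, so that $\Expect[X]=n(1-p)^{n-1}$. Since $p<n^{-1}(\log(n)-\omega(n))$ forces $p\to0$ with $p^2 n\to0$, we have $n(1-p)^{n-1}\sim ne^{-pn}\ge e^{\omega(n)}$, so $\Expect[X]\to\infty$. To promote this to ``$X\ge C$ a.a.s.'' for every fixed $C$, I would compute the second factorial moment: two distinct vertices are simultaneously isolated exactly when all $2n-3$ edges meeting the pair are absent, so $\Expect[X(X-1)]=n(n-1)(1-p)^{2n-3}$ and hence
\[\frac{\Expect[X^2]}{\Expect[X]^2}=\frac{1}{\Expect[X]}+\frac{n-1}{n(1-p)}\longrightarrow 1.\]
Thus $\Variance(X)=o(\Expect[X]^2)$, and Chebyshev's inequality gives $\prob(X<C)\le\Variance(X)/(\Expect[X]-C)^2\to0$.

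For the second statement, if $\Gamma$ is disconnected it has a connected component of some order $k$ with $1\le k\le\lfloor n/2\rfloor$. A fixed $k$-element set $S$ is a component only if it contains one of its $k^{k-2}$ spanning trees (each present with probability $p^{k-1}$) and none of the $k(n-k)$ edges from $S$ to its complement appears, so
\[\prob(\Gamma\text{ is disconnected})\le\sum_{k=1}^{\lfloor n/2\rfloor}\binom{n}{k}k^{k-2}p^{k-1}(1-p)^{k(n-k)}.\]
The hypothesis $np>\log(n)+\omega(n)$ makes the $k=1$ term $n(1-p)^{n-1}\le e\,e^{-\omega(n)}\to0$. For $k\ge2$, using $\binom{n}{k}k^{k-2}\le(en)^k/k^2$, the $k$-th term is at most $(pk^2)^{-1}\bigl(enp(1-p)^{n-k}\bigr)^k$; I would bound $(1-p)^{n-k}\le e^{-p(n-k)}$, treat $k=2$ by hand (there $e^{-p(n-2)}\le e^2e^{-np}\le e^2 n^{-1}e^{-\omega(n)}$, making the term $O(e^{-2\omega(n)})$), and for $k\ge3$ use $n-k\ge n/2$, so that the base is at most $e\,np\,e^{-np/2}\to0$ and the remaining sum is a convergent geometric series whose leading term, after a short computation exploiting that $\mu\mapsto \mu^2 e^{-3\mu/2}$ is eventually decreasing, is $O\bigl((\log n)^2 n^{-1/2}\bigr)$.

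The main obstacle is this last estimate. Each individual term of $\sum_{k=2}^{\lfloor n/2\rfloor}\binom{n}{k}k^{k-2}p^{k-1}(1-p)^{k(n-k)}$ tends to $0$, but a single geometric bound valid over the whole range $2\le k\le\lfloor n/2\rfloor$ is far too lossy for small $k$ when $\omega(n)$ grows slowly --- for instance $\omega(n)=\log\log\log(n)$ --- so the sum has to be organized so that the decay supplied by the hypothesis is extracted term by term before summing. Everything else (the moment identities and the $k=1$ term) is routine computation.
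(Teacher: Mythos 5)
Your proof is correct, and there is essentially nothing in the paper to compare it against: Theorem~\ref{th:connectivitythreshold} is quoted as a classical result of Erd\H os--R\'enyi with a citation to Bollob\'as (Theorems V.3 and VII.3) and no proof is supplied. Your argument --- the second factorial moment plus Chebyshev for the isolated-vertex count, and the spanning-tree union bound over components of order $k\le\lfloor n/2\rfloor$ with the sum handled separately at $k=1$, $k=2$, and $k\ge3$ --- is the standard proof found in the cited source, and the one delicate step you flag does go through: writing $\mu=np\ge\log(n)+\omega(n)$, the tail over $k\ge3$ is at most $p^{-1}\bigl(e\mu e^{-\mu/2}\bigr)^3/(9(1-x))=O\bigl(n\mu^2e^{-3\mu/2}\bigr)=O\bigl((\log n)^2n^{-1/2}\bigr)$ by the eventual monotonicity of $\mu\mapsto\mu^2e^{-3\mu/2}$.
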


\begin{proposition}\label{pr:dominationexistence}
Let $C>0$.
If 
\[p<\frac{\log(n)+\log(\log(n)) - \omega(n)}{n}\]
for some sequence $\omega(n)$ with $\omega(n)\to+\infty$, then a.a.s.\ there are at least $C$ non-adjacent domination pairs in $\Gamma\in\gnp$.
If further, $pn^2\to \infty$, 
then there are also at least $C$ adjacent domination pairs.
Dually, if
\[1-p<\frac{\log(n)+\log(\log(n)) - \omega(n)}{n}\]
for some sequence $\omega(n)$ with $\omega(n)\to+\infty$, then a.a.s.\ there are at least $C$ adjacent domination pairs in $\Gamma\in\gnp$.
Finally, if $(1-p)pn^2\to \infty$ as well, 
then there are also at least $C$ non-adjacent domination pairs.
\end{proposition}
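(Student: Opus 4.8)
The plan is to extract domination pairs directly from vertices of small valence, feed the probability hypotheses into the classical existence results quoted above, and partition the index set into two subsequences in order to cover the whole range $p<n^{-1}(\log n+\log\log n-\omega(n))$; the two dual assertions will then follow formally from Lemma~\ref{le:adpnadp}. First I would record three elementary observations, each of which also exhibits the domination pairs it produces as distinct, since their second coordinates are distinct. (1)~If $b\in V$ has valence $1$ with unique neighbor $c$, then $\lk(b)=\{c\}\subset\st(c)$, so $(c,b)$ is an adjacent domination pair; hence $C$ distinct valence-$1$ vertices give $C$ adjacent domination pairs. (2)~If in addition $b$ lies on no isolated edge, then $c$ has valence at least $2$, so there is $a\in\lk(c)$ with $a\neq b$; as $a\neq c$ and $c$ is the only neighbor of $b$ we get $a\not\sim b$, while $a\sim c$ gives $\lk(b)=\{c\}\subset\st(a)$, so $(a,b)$ is a non-adjacent domination pair, and $C$ such vertices $b$ give $C$ non-adjacent domination pairs. (3)~If $\Gamma$ has at least $C+1$ isolated vertices $v_0,v_1,\dots,v_C$, then $\lk(v_i)=\emptyset\subset\st(v_0)$ and $v_0\not\sim v_i$, so $(v_0,v_1),\dots,(v_0,v_C)$ are $C$ non-adjacent domination pairs.

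Granting these, the second assertion is immediate: under the hypothesis $p<n^{-1}(\log n+\log\log n-\omega(n))$ together with $pn^2\to\infty$, Corollary~\ref{co:manyvalence1} gives a.a.s.\ at least $C$ vertices of valence $1$, hence by~(1) a.a.s.\ at least $C$ adjacent domination pairs. For the first assertion I would partition the index set into $\mathcal N_1=\{\,n:2np-\log n-\log\log n\le\sqrt{\log n}\,\}$ and its complement $\mathcal N_2$, exactly as in the proof of Corollary~\ref{co:manyvalence1}. On $\mathcal N_1$ the defining inequality gives $np\le\tfrac{1}{2}(\log n+\log\log n+\sqrt{\log n})$, so $\log n-np\to\infty$, and Theorem~\ref{th:connectivitythreshold} yields a.a.s.\ at least $C+1$ isolated vertices, hence $C$ non-adjacent domination pairs by~(3). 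On $\mathcal N_2$ we have $2np-\log n-\log\log n>\sqrt{\log n}\to\infty$ while the hypothesis still gives $\log n+\log\log n-np\to\infty$, so for every $n\in\mathcal N_2$ the value $p(n)$ lies in a range of the form required by Proposition~\ref{pr:v1notiso}, with $\omega_1(n)=\sqrt{\log n}$ and $\omega_2(n)=\omega(n)$ both tending to infinity; hence there are a.a.s.\ at least $C$ valence-$1$ vertices off isolated edges, and so $C$ non-adjacent domination pairs by~(2). Since an a.a.s.\ statement that holds separately on each block of a finite partition of the index set holds overall, the first assertion follows.

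For the two dual assertions, complementation is a measure-preserving bijection $\gnp\to\gnthis{1-p}$, and by Lemma~\ref{le:adpnadp} a graph $\Gamma$ has exactly as many adjacent (resp.\ non-adjacent) domination pairs as $\overline{\Gamma}$ has non-adjacent (resp.\ adjacent) ones. So when $1-p<n^{-1}(\log n+\log\log n-\omega(n))$, applying the (already proved) first assertion to $\overline{\Gamma}\in\gnthis{1-p}$ gives a.a.s.\ $C$ non-adjacent domination pairs in $\overline{\Gamma}$, hence $C$ adjacent ones in $\Gamma$; and when moreover $(1-p)pn^2\to\infty$---so that $(1-p)n^2\to\infty$ as well, since $p\le1$---applying the second assertion to $\overline{\Gamma}$ (its remaining hypothesis, $1-p<n^{-1}(\log n+\log\log n-\omega(n))$, being the one just assumed) gives a.a.s.\ $C$ adjacent domination pairs in $\overline{\Gamma}$, hence $C$ non-adjacent ones in $\Gamma$.

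I expect the only point requiring real care to be the choice of the partition $\{\mathcal N_1,\mathcal N_2\}$: one must check that the ``many isolated vertices'' regime of Theorem~\ref{th:connectivitythreshold} and the ``many pendant vertices off isolated edges'' regime of Proposition~\ref{pr:v1notiso} between them cover the whole interval $p<n^{-1}(\log n+\log\log n-\omega(n))$, with room to spare. The cushion $\sqrt{\log n}$ is what makes this work: it is large enough that $\log n-np\to\infty$ throughout $\mathcal N_1$, yet small enough that the margin $2np-\log n-\log\log n$ exceeds it and tends to infinity throughout $\mathcal N_2$. Everything else---the observations~(1)--(3), the bookkeeping of distinct pairs, and the two invocations of duality---is routine.
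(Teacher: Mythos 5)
Your proof is correct and follows essentially the same route as the paper's: the same three elementary observations converting isolated and pendant vertices into domination pairs, the same split of the index set between the regime of Theorem~\ref{th:connectivitythreshold} (many isolated vertices) and that of Proposition~\ref{pr:v1notiso} (many valence-$1$ vertices off isolated edges), and the same appeal to Lemma~\ref{le:adpnadp} and complementation for the two dual statements. Your version is in fact slightly more careful in two places: your observation~(2) silently corrects what is evidently a typo in the paper's proof (which asserts that $(c,a)$, rather than $(c,b)$, is the non-adjacent domination pair produced by a pendant vertex $b$ with neighbor $a$ and a second vertex $c\sim a$), and you make the subsequence partition explicit via the cushion $\sqrt{\log n}$ where the paper merely asserts that a suitable decomposition exists.
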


\begin{proof}
It is enough to show the first two statements in the proposition, where $p\to 0$; Then the second two statements, where $p\to 1$, follow dually by Lemma~\ref{le:adpnadp}.

If $b$ is an isolated vertex, then for any other vertex $a$, the pair $(a,b)$ is a non-adjacent domination pair.
If $b$ is a vertex of valence $1$ and $a$ is the vertex $b$ is adjacent to, then $(a,b)$ is an adjacent domination pair.
If $b$ is a vertex of valence $1$ adjacent to a vertex $a$, and $c$ is some third vertex adjacent to $a$, then $(c,a)$ is a non-adjacent domination pair.
So the number of adjacent domination pairs is at least the number of vertices of valence $1$, and the number of non-adjacent domination pairs is at least the number of isolated vertices, plus the number of vertices of valence $1$ not on isolated edges.

If $p$ satisfies the more general hypotheses of the proposition, we break it into at most two subsequences, one which satisfies the hypotheses of Proposition~\ref{pr:v1notiso} and the other which satisfies $np-\log(n)\to -\infty$.
On the first subsequence, the probability of having at least $C$ valence-$1$ vertices not on isolated edges goes to $1$, 
and on the second subsequence, the probability of having at least $C$ isolated vertices goes to $1$ by Theorem~\ref{th:connectivitythreshold}.
So the probability of having at least $C$ non-adjacent domination pairs goes to $1$ on the entire sequence.

If $p$ satisfies the more restrictive hypotheses, then Corollary~\ref{co:manyvalence1} applies and there are a.a.s.\ at least $C$ vertices of valence $1$.
Then a.a.s.\ we also have at least $C$ adjacent domination pairs.
\end{proof}

\subsection{Nonexistence results}
We proceed to count non-adjacent domination pairs; our results on adjacent ones follow using Lemma~\ref{le:adpnadp}.
It is enough to show the first two statements in the proposition, where $p\to 0$, and the second two statements, where $p\to 1$, follow dually by Lemma~\ref{le:adpnadp}.

\begin{proposition}\label{pr:countnadp}
The expected number of non-adjacent domination pairs in $\Gamma$ in $\gnp$ is
\[n(n-1)(1-p)(p+(1-p)^2)^{n-2}.\]
\end{proposition}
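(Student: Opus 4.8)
The plan is to compute the expectation by linearity over ordered pairs of distinct vertices. For distinct $a,b\in V$, let $X_{a,b}$ be the indicator of the event that $(a,b)$ is a non-adjacent domination pair in $\Gamma$. There are $n(n-1)$ such ordered pairs, and since $\gnp$ is invariant under relabeling the vertex set, $\Expect[X_{a,b}]$ does not depend on the choice of $a$ and $b$. Hence the expected number of non-adjacent domination pairs equals $n(n-1)$ times $\prob[(a,b)\text{ is a non-adjacent domination pair}]$ for any single fixed ordered pair $(a,b)$, and it remains to compute this probability exactly (for every value of $p$; no asymptotic hypothesis is needed here).

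Next I would decompose the event into conditions on individual potential edges. The condition $a\not\sim b$ is just the absence of the edge $\{a,b\}$, which has probability $1-p$. Given that $a\not\sim b$, the domination condition $\lk(b)\subset\st(a)$ reduces to: every vertex $c$ adjacent to $b$ is adjacent to $a$ (the alternative $c=a$ in $\st(a)$ cannot occur, precisely because $a\not\sim b$). Equivalently, for each third vertex $c\in V\drop\{a,b\}$, it is \emph{not} the case that $c\sim b$ while $c\not\sim a$.

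The key computation is then a four-way case analysis on the pair of edges $\{a,c\},\{b,c\}$ for a fixed third vertex $c$: the four present/absent combinations occur with probabilities $p^2$, $p(1-p)$, $(1-p)p$, and $(1-p)^2$, and exactly one of them — ``$c\not\sim a$ but $c\sim b$'' — is forbidden. Thus the probability that a given $c$ does not obstruct the domination of $b$ by $a$ is $p^2+p(1-p)+(1-p)^2 = p+(1-p)^2$. Since the edge sets $\{\{a,c\},\{b,c\}\}$ for distinct $c\in V\drop\{a,b\}$ are pairwise disjoint and also disjoint from $\{\{a,b\}\}$, these $n-1$ events are mutually independent, so the probabilities multiply: $\prob[(a,b)\text{ is a non-adjacent domination pair}] = (1-p)\bigl(p+(1-p)^2\bigr)^{n-2}$. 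Multiplying by $n(n-1)$ gives the stated formula.

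There is no real obstacle in this argument; the only points needing care are the bookkeeping in the case analysis — in particular noting that the $c=a$ clause of $\st(a)$ never contributes because $(a,b)$ is non-adjacent, so domination genuinely reduces to $c\sim a$ for all $c\sim b$ — and verifying that the relevant edge events are honestly independent, so that the product expression for the probability is justified.
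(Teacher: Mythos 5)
Your proof is correct and follows essentially the same route as the paper: linearity of expectation over the $n(n-1)$ ordered pairs, followed by factoring the domination event into independent per-vertex events each of probability $p+(1-p)^2$ (your four-way case analysis on the edges $\{a,c\},\{b,c\}$ is just a slightly more explicit version of the paper's disjoint-union computation), together with independence from the edge $\{a,b\}$. No gaps.
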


\begin{proof}
Let $X\co \gnp\to \Z$ be the random variable with $X(\Gamma)$ equal to the number of pairs $(a,b)$ of distinct vertices in $V$ with $a$ not adjacent to $b$ and $a>b$ in $\Gamma$.
For each pair $(a,b)\in V^2$ with $a\neq b$, we define a random variable
$\widehat X_{(a,b)}\co \gnp\to \Z$ with $\widehat X_{(a,b)}(\Gamma)$ equal to $1$ if $a$ is not adjacent to $b$ and $a>b$, and equal to $0$ otherwise.
The expectation of $\widehat X_{(a,b)}$ is the probability that $(a,b)$ is a non-adjacent domination pair.

Suppose $a,b\in V$ with $a\neq b$.
For each $c\in V\drop \{a,b\}$, let $S_c\subset \gnp$ denote the event that
either $c\sim a$ or both $c\not\sim a$ and $c\not\sim b$.
This is a union of two disjoint events whose probabilities are $p$ and $(1-p)^2$.
So $\prob(S_c)=p +(1-p)^2$.
Since these events involve different edges for different choices of $c$, the events $\{S_c\}_{c\in V\drop\{a,b\}}$ are independent.
The event $\bigcap_{c\in V\drop\{a,b\}}S_c$ is exactly the event that
every vertex adjacent to $b$ is also adjacent to $a$, which is by definition the event that $a>b$.
So in particular,
\[\prob(a>b)=\prob\left(\bigcap_{c\in V\drop\{a,b\}}S_c\right)=(1-p+p^2)^{n-2}.\]

The event that $a$ dominates $b$ involves only the edges from $a$ and $b$ to other vertices; in particular, the event that $a$ is non-adjacent to $b$ is independent of it.
Then the expectation of $\widehat X_{(a,b)}$ is the product of these probabilities:
\[\Expect(\widehat X_{(a,b)})=(1-p)(p+(1-p)^2)^{n-2}.\]
Since $X$ counts the number of non-adjacent domination pairs, and each $X_{(a,b)}$ counts whether $(a,b)$ is such a pair, we have:
\[X = \sum_{a\in V}\sum_{b\in V\drop\{a\}} \widehat X_{(a,b)}.\]
Then by linearity of expectations:
\[\Expect(X)=n(n-1)(1-p)(p+(1-p)^2)^{n-2}.\]
\end{proof}

To show that probability of a domination pair goes to zero in a certain range, we take limits of these expectations and use Markov's inequality.
We will use the following lemma in taking these limits.
In fact, counting star-cut-vertices will involve a closely related limit, so to reuse this lemma, we use a parameter $k$.
\begin{lemma}\label{le:convexbound}
Suppose $k\geq 1$ is an integer and $p=p(n)$ satisfies
\[2\frac{\log(n)+\omega(n)}{n} \leq p\leq 1-(k+1)\frac{\log(n)+\omega(n)}{n},\]
for some sequence $\omega(n)$ that approaches infinity.
Let $F(x,y)$ be defined by
\[F(x,y)=x^{k+1}(y+(1-y)^{k+1})^{x-k-1},\] for suitable $x,y\in\R$.
Then
$\lim_{n\to\infty} F(n,p)=0.$
\end{lemma}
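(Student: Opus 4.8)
The plan is to show that $\log F(n,p)\to-\infty$. Writing $q=1-p$, set
\[ g(p)\;=\;1-\bigl(p+q^{k+1}\bigr)\;=\;q\bigl(1-q^{k}\bigr)\;=\;p\bigl(q+q^{2}+\cdots+q^{k}\bigr),\]
so that $F(n,p)=n^{k+1}\bigl(1-g(p)\bigr)^{n-k-1}$. Since $-\log(1-x)\ge x$ for $x\in[0,1)$, this gives $F(n,p)\le\exp\bigl((k+1)\log n-(n-k-1)g(p)\bigr)$, so it is enough to prove that $(n-k-1)g(p)-(k+1)\log n\to+\infty$. I would argue by contradiction. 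If this fails, then there are $\epsilon\in(0,1)$ and a subsequence $(n_j)$ along which $F(n_j,p(n_j))\ge\epsilon$, hence
\begin{equation*}
(n_j-k-1)\,g(p(n_j))\;\le\;(k+1)\log n_j+\abs{\log\epsilon}. \tag{$\ast$}
\end{equation*}
The right-hand side is $O(\log n_j)$, so $g(p(n_j))\to0$; as $g$ is continuous on $[0,1]$ and vanishes only at $0$ and $1$, every accumulation point of $(p(n_j))_j$ lies in $\{0,1\}$, and after passing to a further subsequence we may assume either $p(n_j)\to0$ or $p(n_j)\to1$. The two cases are handled the same way, so I describe the first.

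Suppose $p(n_j)\to0$. Bernoulli's inequality $q^{i}\ge1-ip$ gives $g(p)=p\sum_{i=1}^{k}q^{i}\ge kp-\binom{k+1}{2}p^{2}=kp\bigl(1-\tfrac{k+1}{2}p\bigr)$. For large $j$ the factor $1-\tfrac{k+1}{2}p(n_j)$ exceeds $\tfrac12$, so $(\ast)$ forces $\tfrac14 k\,n_j p(n_j)\le (k+1)\log n_j+\abs{\log\epsilon}$, i.e.\ $n_j p(n_j)=O(\log n_j)$, and hence $p(n_j)=O\bigl((\log n_j)/n_j\bigr)$. Feeding this bound back, the quadratic term is negligible: $\binom{k+1}{2}(n_j-k-1)p(n_j)^{2}=O\bigl((\log n_j)^{2}/n_j\bigr)=o(1)$, so
\[ (n_j-k-1)g(p(n_j))\;\ge\;k\,n_j p(n_j)-o(1)\;\ge\;2k\bigl(\log n_j+\omega(n_j)\bigr)-o(1),\]
using the hypothesis $n_j p(n_j)\ge2(\log n_j+\omega(n_j))$. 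Combining this with $(\ast)$ yields $(k-1)\log n_j+2k\,\omega(n_j)\le\abs{\log\epsilon}+o(1)$, which is absurd because $k\ge1$ and $\omega(n_j)\to\infty$.

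For the case $p(n_j)\to1$ I would set $q_j=1-p(n_j)\to0$ and use $g(p(n_j))=q_j(1-q_j^{k})=q_j-q_j^{k+1}$ together with the hypothesis $q_j\ge(k+1)(\log n_j+\omega(n_j))/n_j$. Exactly as above, $(\ast)$ first forces $n_j q_j=O(\log n_j)$, so $q_j=O\bigl((\log n_j)/n_j\bigr)$ and then $n_j q_j^{k+1}=O\bigl((\log n_j)^{k+1}/n_j^{\,k}\bigr)=o(1)$ since $k\ge1$; hence $(n_j-k-1)g(p(n_j))\ge n_j q_j-o(1)\ge(k+1)(\log n_j+\omega(n_j))-o(1)$, and comparing with $(\ast)$ gives $(k+1)\omega(n_j)\le\abs{\log\epsilon}+o(1)$, again impossible.

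The only place that needs care---and hence the main obstacle---is the bookkeeping of the error terms: one must verify that once $(\ast)$ is used to pin $p(n_j)$ (resp.\ $1-p(n_j)$) down to size $O\bigl((\log n_j)/n_j\bigr)$, all the quadratic-and-higher corrections are genuinely $o(1)$, small enough that the surplus $2k\,\omega(n_j)$ (resp.\ $(k+1)\omega(n_j)$) survives the comparison. The constants $2$ and $k+1$ in the hypothesis on $p$ are exactly what make the surviving main term nonnegative: one needs $2k\ge k+1$, i.e.\ $k\ge1$, in the first case, and $k+1\ge k+1$ in the second.
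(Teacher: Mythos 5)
Your proof is correct, but it reaches the conclusion by a different route than the paper. The paper fixes $n$ and computes $\partial^2 F/\partial y^2$, checking it is nonnegative for $y\in[0,1]$ and $x$ large, so that $F(n,\cdot)$ is convex and its maximum over the admissible interval for $p$ is attained at one of the two endpoints $a(n)=2n^{-1}(\log(n)+\omega(n))$ and $1-b(n)$ with $b(n)=(k+1)n^{-1}(\log(n)+\omega(n))$; it then estimates $F(n,a(n))\lesssim n^{k+1}\cdot n^{-2k}e^{-2\omega(n)}$ and $F(n,1-b(n))\lesssim n^{k+1}\cdot n^{-(k+1)}e^{-(k+1)\omega(n)}$ via $(1+s)^t\leq e^{st}$. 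You avoid the convexity reduction entirely: the factorization $1-(p+(1-p)^{k+1})=q(1-q^k)$ plus $1-x\leq e^{-x}$ reduces everything to showing $(n-k-1)q(1-q^k)-(k+1)\log n\to\infty$, and your contradiction-and-bootstrap argument (first use the assumed failure to pin $p$, resp.\ $q=1-p$, down to $O(\log n/n)$, then check the higher-order corrections are $o(1)$) substitutes for the observation that only $p$ near the endpoints is dangerous. The exponent arithmetic you end up with --- the main term $2k(\log n+\omega)$ versus $(k+1)\log n$ near $p\to0$, requiring $2k\geq k+1$, and $(k+1)(\log n+\omega)$ versus $(k+1)\log n$ near $p\to1$ --- is exactly the arithmetic in the paper's two endpoint estimates, so the two proofs are morally the same computation; yours trades a second-derivative verification for some subsequence bookkeeping and is somewhat more elementary, while the paper's convexity statement handles the whole interval in one stroke and is reused implicitly when the lemma is invoked with general $k$ in Proposition~\ref{pr:smallstarseps}. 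One small presentational point: your sentence ``If this fails, then there are $\epsilon$ and a subsequence with $F\geq\epsilon$'' has the implication backwards as written --- what you actually use (and what is correct) is that if $F\not\to0$ then $F\geq\epsilon$ along a subsequence, and that together with your upper bound on $F$ yields $(\ast)$; stated that way the argument is airtight.
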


\begin{proof}
We take the second partial derivative 
\[
\begin{split}
\frac{\partial^2 F}{\partial y^2} &= x^{k+1}(x-k-1)(y+(1-y)^{k+1})^{x-k-3}\\ 
&\quad\cdot ((x-k-2)(1-(k+1)(1-y)^k)^2+(k+1)k(1-y)^{k-1}).
\end{split}
\]
For values of $y$ in $[0,1]$ and values of $x$ in $(k+3,\infty)$,
we see that $\partial^2 F/\partial y^2$ is positive, so that $F$ is concave up in its second input (in this range).
Let $a(n)$ be the lower bound for $p$ from the statement, and let $1-b(n)$ be the upper bound.
Then for large enough $n$, we have
\[0\leq F(n,p(n))\leq \max\{F(n,a(n)),F(n,1-b(n))\}.\]

Using the well-known bound $(1+s)^t\leq e^{st}$ (for $t>0$), we see
\[F(n,a(n))\leq n^{k+1}\exp[(n-k-1)(a(n)+(1-a(n))^{k+1})].\]
We may write this bound as
\[n^{k+1}\exp(-kna(n) + O(na(n)^2))\]
as $n\to\infty$,
using the binomial expansion of $(1-a(n))^{k+1}$ and the fact that $a(n)$ is $O(na(n)^2)$.
This is equivalent to $n^{k+1}\cdot n^{-2k}e^{-2\omega(n)}$ as $n\to\infty$, so $F(n,a(n))\to 0$.

Similarly, we have
\[F(n,1-b(n))\leq n^{k+1}\exp[(n-k-1)(1-b(n)+b(n)^{k+1})],\]
which can be written as
\[n^{k+1}\exp(-nb(n)+O(nb(n)^2))\]
as $n\to\infty$.
This is equivalent to $n^{k+1}\cdot n^{-k-1}\exp(-(k+1)\omega(n))$ as $n\to\infty$, so $F(n,1-b(n))\to 0$.
Then $F(n,p)\to 0$ as $n\to\infty$ for any sequence $p(n)$ in the given range.
\end{proof}

\begin{proposition}\label{pr:firstnonexist}
Suppose $\omega(n)$ is a sequence of real numbers tending to positive infinity.
If $p=p(n)$ is a sequence of probability values satisfying
\begin{equation*}
2\frac{\log(n)+\omega(n)}{n}\leq p\leq 1-\frac{\log(n)+\log(\log(n))+\omega(n)}{n},
\end{equation*}
then the probability that $\Gamma$ in $\gnp$ has a non-adjacent domination pair is a.a.s.\ zero.
\end{proposition}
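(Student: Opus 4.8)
The plan is a first-moment (Markov) argument resting on the count in Proposition~\ref{pr:countnadp}. Let $X$ be the number of non-adjacent domination pairs of $\Gamma\in\gnp$; by that proposition $\Expect(X)=n(n-1)(1-p)\bigl(p+(1-p)^2\bigr)^{n-2}$, and the probability that $\Gamma$ has a non-adjacent domination pair is $\prob(X\ge 1)\le\Expect(X)$. So it suffices to show $\Expect(X)\to 0$ over the stated range of $p$.

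First I would split the sequence $\{(n,p(n))\}$ according to whether $p\le 1-2n^{-1}(\log n+\omega(n))$ or not; showing $\Expect(X)\to 0$ along each subsequence gives it along the whole sequence. On the subsequence where $p\le 1-2n^{-1}(\log n+\omega(n))$, the hypotheses of Lemma~\ref{le:convexbound} hold with $k=1$, so $F(n,p)=n^2\bigl(p+(1-p)^2\bigr)^{n-2}\to 0$; since $n(n-1)\le n^2$ and $1-p\le 1$ we have $\Expect(X)\le F(n,p)$, and we are done there. On the other subsequence $p$ is close to $1$. Writing $q=1-p$, membership in this subsequence gives $q<2n^{-1}(\log n+\omega)$, while the hypothesis $p\ge 2n^{-1}(\log n+\omega)$ gives $q\le 1-2n^{-1}(\log n+\omega)$; since $\min\{t,1-t\}\le\tfrac12$ this yields $q<\tfrac12$, and the hypothesis $p\le 1-n^{-1}(\log n+\log\log n+\omega)$ gives $nq\ge\log n+\log\log n+\omega$. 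Using $\log\bigl(1-q(1-q)\bigr)\le -q(1-q)$ we get $\Expect(X)=n(n-1)\,q\,(1-q+q^2)^{n-2}\le n^2 q\,e^{-(n-2)q(1-q)}$.

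From here I would split once more, on whether $nq>(\log n)^2$. In the first case, $q<\tfrac12$ gives $(n-2)q(1-q)\ge\tfrac12(n-2)q\ge\tfrac12(nq-1)\ge\tfrac12\bigl((\log n)^2-1\bigr)$, so $\Expect(X)\le n^2 e^{-\frac12(\log n)^2+\frac12}=e^{2\log n-\frac12(\log n)^2+\frac12}\to 0$. In the second case $q\le n^{-1}(\log n)^2\to 0$, so $(n-2)q^2\le(nq)\,q\le n^{-1}(\log n)^4\to 0$ and $2q\le 1$ for large $n$; hence $(1-q+q^2)^{n-2}\le e^{(n-2)q^2}e^{-(n-2)q}\le e^{2}e^{-nq}\le e^{2}e^{-\omega}/(n\log n)$ (using $nq\ge\log n+\log\log n+\omega$), and combining with $n^2 q<2n(\log n+\omega)$ gives $\Expect(X)<2e^{2}\,\dfrac{\log n+\omega}{\log n}\,e^{-\omega}\to 0$, since $e^{-\omega}\to 0$ and $\tfrac{\omega}{\log n}e^{-\omega}\le\omega e^{-\omega}\to 0$.

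The one delicate point---everything else is bookkeeping---is the near-$1$ band $1-2n^{-1}(\log n+\omega)<p\le 1-n^{-1}(\log n+\log\log n+\omega)$, which Lemma~\ref{le:convexbound} does not cover. There the tidy bound $(1-q+q^2)^{n-2}\le e^{-(n-2)q(1-q)}$ throws away a constant factor in the exponent that is fatal precisely when $q$ has order $\log n/n$; the remedy is to note that this bound is asymptotically sharp as $q\to 0$---so the factor $1-q$ costs only $1+o(1)$, via $(n-2)q^2\le(nq)q=o(1)\cdot(n-2)q$---and to keep the factor $q=1-p$ in $\Expect(X)$, the very feature distinguishing $\Expect(X)$ from $F(n,p)$, to soak up the leftover $n^2 q\asymp n(\log n+\omega)$.
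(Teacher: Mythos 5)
Your proposal is correct and follows essentially the same route as the paper: a first-moment bound via Proposition~\ref{pr:countnadp} and Markov's inequality, Lemma~\ref{le:convexbound} with $k=1$ on the subsequence $p\le 1-2n^{-1}(\log n+\omega)$, and a direct exponential estimate of $n^2q(1-q+q^2)^{n-2}$ on the complementary near-$1$ band. Your handling of that band (splitting on $nq$ versus $(\log n)^2$ with explicit constants) is just a more careful version of the paper's step, which instead assumes $q\le 3n^{-1}\log n$ there and implicitly normalizes $\omega$ so that the two subsequences cover everything.
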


\begin{proof}
Let $X$ be the random variable from the Proposition~\ref{pr:countnadp}.
According to that proposition, we have 
\[\Expect(X)\leq n^2(p+(1-p)^2)^{n-2}.\]
Then by Lemma~\ref{le:convexbound}, with $k=1$, we have that $E(X)\to 0$ as $n\to\infty$ if we assume that
$p\leq 1-2n^{-1}(\log(n)+\omega(n))$.

Next we momentarily assume that
 $p(n)\geq 1-3n^{-1}\log(n)$.
We change variables to use $q=1-p$.
In this case
\[\Expect(X)\leq n^2q(1-q+q^2)^{n-2}\leq n^2q\exp(-nq+O(nq^2)).\]
Then using the bounds on $q$, we have that
\[\Expect(X)\lesssim 3n\log(n)\cdot n^{-1}\log(n)^{-1}e^{-\omega(n)}.\]
This certainly limits to zero.

If $p$ satisfies the more general bounds, we subdivide the sequence $(n,p(n))$ into two subsequences, the first of which satisfies $p(n)\leq 1-2n^{-1}(\log(n)+\omega(n)$, and the second of which satisfies $p(n)\geq 1-3n^{-1}\log(n)$.
Since $E(X)$ goes to zero on both of these subsequences, we have that $E(X)\to 0$ as $n\to\infty$.

Markov's inequality states that for any $\lambda>0$, 
\[\prob(X\geq \lambda)\leq (1/\lambda)\Expect(X).\]
Setting $\lambda=1/2$, we have $\prob(X\geq 1/2)=\prob(X\neq 0)$, which therefore goes to $0$ as $n\to\infty$.
\end{proof}

To tighten the range of probability functions in which non-adjacent domination pairs occur, we consider the following configuration, which we call a \emph{domination diamond}.
This is a quadruple $(a,b,c,d)$ of vertices, all distinct, with $a\sim b\sim c\sim d \sim a$, $a\not\sim c$, $b\not\sim d$, and $a>c$.

\begin{lemma}\label{le:mustbeldd}
If $(a,c)$ is a non-adjacent domination pair, $c$ is not isolated, and nothing dominates $c$ adjacently, then there is a domination diamond $(a,b,c,d)$ in $\Gamma$.
\end{lemma}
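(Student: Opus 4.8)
The plan is to unwind the three hypotheses directly and then produce the two extra vertices $b$ and $d$ that complete the domination diamond. We are given that $(a,c)$ is a non-adjacent domination pair, so $a \not\sim c$ and $\lk(c) \subset \st(a)$; since $a \not\sim c$ this actually means $\lk(c) \subset \lk(a)$. Because $c$ is not isolated, $\lk(c)$ is nonempty, so I can pick some $b \in \lk(c)$; by the containment just noted, $b \in \lk(a)$ as well, so $a \sim b \sim c$ and $a \not\sim c$. This already gives three-quarters of the diamond together with the non-adjacency $a \not\sim c$ and the domination $a > c$, so the only remaining task is to locate a vertex $d$ with $a \sim d \sim c$ and $b \not\sim d$.

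First I would observe that the hypothesis ``nothing dominates $c$ adjacently'' is the lever that forces such a $d$ to exist. Consider the candidate vertex $b$ chosen above: since $b \sim c$, if $b$ also dominated $c$ then $(b,c)$ would be an adjacent domination pair, contradicting the hypothesis. So $b$ does \emph{not} dominate $c$, which by definition of domination means there is some vertex $d \in \lk(c)$ with $d \notin \st(b)$, i.e. $d \neq b$ and $d \not\sim b$. Now I would check that this $d$ completes the diamond: because $d \in \lk(c)$ we have $c \sim d$, and because $a > c$ and $d \in \lk(c) \subset \lk(a)$ we get $a \sim d$; combined with $a \sim b \sim c \sim d$ already established and $a \not\sim c$ and $b \not\sim d$, the quadruple $(a,b,c,d)$ satisfies every clause in the definition of a domination diamond, provided all four vertices are distinct.

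The one step needing a little care — and the place I expect the only real obstacle — is verifying that $a$, $b$, $c$, $d$ are pairwise distinct. We have $a \neq c$ (they are non-adjacent, hence distinct), $b \neq d$ and $b \neq c$ and $d \neq c$ (each of $b,d$ lies in $\lk(c)$, so is adjacent to $c$ and hence unequal to $c$; and $d \neq b$ was built into the choice of $d$). It remains to rule out $a = b$ and $a = d$. But $b \in \lk(c)$ and $d \in \lk(c)$ while $a \notin \lk(c)$ since $a \not\sim c$; hence $a \neq b$ and $a \neq d$. So all four are distinct, and the proof is complete. The whole argument is a short chase through the definitions; no probabilistic input is needed here, since this lemma is a purely combinatorial statement about a fixed graph $\Gamma$ that will later be applied to the random graph to reduce counting non-adjacent domination pairs to counting domination diamonds.
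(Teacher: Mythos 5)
Your proof is correct and follows essentially the same route as the paper: the paper observes that if $\lk(c)$ induced a complete graph then any of its elements would adjacently dominate $c$, and so extracts two non-adjacent vertices $b,d\in\lk(c)$, which is the same use of the ``nothing dominates $c$ adjacently'' hypothesis as your choice of $b$ followed by a witness $d\notin\st(b)$. Your additional verification that the four vertices are distinct is a detail the paper leaves implicit, but nothing of substance differs.
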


\begin{proof}
Of course $\lk(c)$ is nonempty.
If the induced subgraph on $\lk(c)$ is a complete graph, then any element of $\lk(c)$ adjacently dominates $c$.
So there are two vertices $b,d\in\lk(c)$ with $b\not\sim d$.
Since $b,d\sim c$ and $a>c$, we know $b,d\sim a$.
\end{proof}

\begin{proposition}\label{pr:noldd}
If $p\to 0$ and $np\to\infty$ as $n\to\infty$, then a.a.s.\ $\Gamma\in\gnp$ has no domination diamonds.
\end{proposition}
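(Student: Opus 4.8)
The plan is to bound the expected number of domination diamonds by a first-moment computation and show it tends to zero under the hypotheses $p \to 0$ and $np \to \infty$; then Markov's inequality finishes the argument as in Proposition~\ref{pr:firstnonexist}. A domination diamond is a quadruple $(a,b,c,d)$ of distinct vertices with the four ``cycle'' edges $a\sim b$, $b\sim c$, $c\sim d$, $d\sim a$ present, the two ``diagonal'' non-edges $a\not\sim c$ and $b\not\sim d$ absent, and the domination condition $a > c$. I would introduce the random variable $Y\co\gnp\to\Z$ counting ordered diamonds and, for each eligible quadruple, an indicator $\widehat Y_{(a,b,c,d)}$; by linearity of expectation $\Expect(Y) = \sum \Expect(\widehat Y_{(a,b,c,d)})$, and there are $n(n-1)(n-2)(n-3) \sim n^4$ such quadruples.

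The key step is to compute $\Expect(\widehat Y_{(a,b,c,d)})$, i.e.\ the probability that a fixed quadruple is a domination diamond. First I would observe that the four prescribed edges and two prescribed non-edges among $\{a,b,c,d\}$ are independent of each other and of everything involving the other $n-4$ vertices, contributing a factor $p^4(1-p)^2$. For the domination condition $a > c$, I would mimic the argument in Proposition~\ref{pr:countnadp}: for each $e \in V\drop\{a,b,c,d\}$, the event $S_e$ that ``$e \sim a$, or both $e\not\sim a$ and $e\not\sim c$'' has probability $p + (1-p)^2 = 1 - p + p^2$, and these events are independent over the $n-4$ choices of $e$. One subtlety: I must also ensure $b$ and $d$, which are already adjacent to $c$, are adjacent to $a$ — but that is forced by the edges $a\sim b$ and $a\sim d$ already counted in the $p^4$ factor, so no new constraint appears. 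Hence
\[
\Expect(\widehat Y_{(a,b,c,d)}) = p^4(1-p)^2(1-p+p^2)^{n-4},
\]
and therefore $\Expect(Y) \sim n^4 p^4 (1-p+p^2)^{n-4}$ (using $(1-p)^2 \to 1$).

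It remains to show this tends to zero. Since $p \to 0$, the estimate $(1-p+p^2)^{n-4} = \exp((n-4)\log(1-p+p^2)) = \exp(-np + O(np^2))$ holds as $n\to\infty$, so $\Expect(Y) \lesssim n^4 p^4 e^{-np + O(np^2)}$. Writing $t = np \to \infty$, the bound becomes $t^4 e^{-t + O(t p)}$; since $p \to 0$, the error term $O(tp) = O(np^2)$ is $o(t)$, and $t^4 e^{-t(1+o(1))} \to 0$ because exponential decay beats any fixed power. (If one wants to be careful about the regime where $np^2$ is not small, note $np^2 = tp \le t$ eventually dominates nothing — but in fact $p\to 0$ forces $np^2/np = p \to 0$, so the exponent is $-t(1+o(1))$ unconditionally.) Thus $\Expect(Y) \to 0$, and by Markov's inequality $\prob(Y \ge 1) \le \Expect(Y) \to 0$, so a.a.s.\ $\Gamma$ has no domination diamonds. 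The main obstacle I anticipate is purely bookkeeping: carefully separating which of the $\binom{4}{2}=6$ pairs inside $\{a,b,c,d\}$ are constrained by the diamond shape versus which vertices outside contribute the domination factor, and confirming no double-counting or missed constraint occurs when $b,d$ play a role both as diamond-vertices and as ``test vertices'' for the domination $a>c$.
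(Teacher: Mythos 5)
Your proposal is correct and follows essentially the same first-moment argument as the paper: the same indicator decomposition, the same probability $p^4(1-p)^2(p+(1-p)^2)^{n-4}$ for a fixed quadruple, and the same conclusion via $(np)^4e^{-np}\to 0$ and Markov's inequality. If anything, your explicit treatment of the $O(np^2)$ error in the exponent is slightly more careful than the paper's bare assertion that $(p+(1-p)^2)^{n-4}\sim\exp(-np)$.
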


\begin{proof}
Let $\widehat W_{(a,b,c,d)}$ be the random variable which is $1$ if $(a,b,c,d)$ is a domination diamond and $0$ otherwise.
Then
\[\Expect(\widehat W_{(a,b,c,d)})=p^4(1-p)^2(p+(1-p)^2)^{n-4},\]
since the mandated edges and non-edges among $(a,b,c,d)$ are given, and for any fifth vertex $e$, we must have $e$ adjacent to $c$ or not adjacent to $c$ or $a$.
Setting $W$ equal to the sum of all $\widehat W_{(a,b,c,d)}$ over all choices of $(a,b,c,d)$, we have that the random variable $W$ counts the number of domination diamonds.
By additivity:
\[\Expect(W)=\frac{n!}{(n-4)!}p^4(1-p)^2(p+(1-p)^2)^{n-4}\]
Since $p\to 0$, we know
\[(p+(1-p)^2)^{n-4}\sim \exp(-np).\]
Then
\[\Expect(W)\sim (np)^4\exp(-np).\]
Since the function $t\mapsto t^4e^{-t}$ converges to $0$ as $t\to\infty$, our hypothesis that $np\to\infty$ forces $\Expect(W)$ to converge to $0$ as $p\to\infty$.
The proposition follows immediately by Markov's inequality.
\end{proof}

\begin{theorem}\label{th:dominationnonexistence}
Suppose $\omega_1(n), \omega_2(n)$ are sequences with $\omega_1(n),\omega_2(n)\to+\infty$ as $n\to\infty$.
If $p$ satisfies:
\[\frac{\log(n)+\log(\log(n))+\omega_1(n)}{n}<p<1-\frac{\log(n)+\log(\log(n))+\omega_2(n)}{n}\]
then $\Gamma\in\gnp$ a.a.s.\ has no domination pairs.
\end{theorem}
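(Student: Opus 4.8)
\section*{Proof proposal}

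The plan is to assemble the theorem from the pieces already in hand: the first--moment bound of Proposition~\ref{pr:firstnonexist}, the domination--diamond dichotomy (Lemma~\ref{le:mustbeldd} together with Proposition~\ref{pr:noldd}), the Erd\H os--R\'enyi connectivity threshold (Theorem~\ref{th:connectivitythreshold}), and the complementation duality of Lemma~\ref{le:adpnadp}. The first move is a reduction: it suffices to prove the one--sided statement that, under the hypothesis of the theorem, $\Gamma\in\gnp$ a.a.s.\ has no \emph{non-adjacent} domination pair. Indeed, the interval in the hypothesis has the same shape in $p$ and in $1-p$ (with the roles of $\omega_1$ and $\omega_2$ interchanged), so applying this one--sided statement to $\overline\Gamma\in\gnthis{1-p}$ and invoking Lemma~\ref{le:adpnadp} shows that $\Gamma$ a.a.s.\ has no adjacent domination pair either; together these give the theorem.

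To prove the one--sided statement, I would first observe that shrinking $\omega_1$ and $\omega_2$ only enlarges the permitted range of $p$, so we may assume $\omega_1(n),\omega_2(n)=o(\log n)$ (for instance, replace $\omega_i(n)$ by $\min(\omega_i(n),\log\log n)$). I would then split the sequence $\{(n,p(n))\}$ into two subsequences. On the subsequence where $p\geq 2n^{-1}(\log(n)+\omega_2(n))$, the chain of inequalities $2n^{-1}(\log(n)+\omega_2(n))\leq p\leq 1-n^{-1}(\log(n)+\log\log(n)+\omega_2(n))$ holds, so Proposition~\ref{pr:firstnonexist} (with $\omega=\omega_2$) immediately yields that $\Gamma$ a.a.s.\ has no non-adjacent domination pair. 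This subsequence contains everything except the narrow ``lower gap'' near $p\sim\log(n)/n$.

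The remaining subsequence, where $p< 2n^{-1}(\log(n)+\omega_2(n))$, is the crux: here the first--moment bound of Proposition~\ref{pr:firstnonexist} is no longer small enough, so the argument must become structural. On this subsequence $p\to 0$ while $np> \log(n)+\log\log(n)+\omega_1(n)\to\infty$, so three things hold a.a.s.: (i) by Proposition~\ref{pr:noldd}, $\Gamma$ has no domination diamonds; (ii) by Theorem~\ref{th:connectivitythreshold}, since $p>n^{-1}(\log(n)+(\log\log(n)+\omega_1(n)))$, the graph $\Gamma$ is connected and in particular has no isolated vertices; and (iii) since $1-p\to 1$ we have $2n^{-1}(\log(n)+\omega_1(n))\leq 1-p\leq 1-n^{-1}(\log(n)+\log\log(n)+\omega_1(n))$ for large $n$, so Proposition~\ref{pr:firstnonexist} applied to $\overline\Gamma\in\gnthis{1-p}$ together with Lemma~\ref{le:adpnadp} gives that $\Gamma$ a.a.s.\ has no adjacent domination pair, i.e.\ no vertex of $\Gamma$ is adjacently dominated. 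When (i), (ii) and (iii) all hold, the contrapositive of Lemma~\ref{le:mustbeldd} forces $\Gamma$ to have no non-adjacent domination pair: such a pair $(a,c)$ would have $c$ non-isolated and not adjacently dominated, hence would produce a domination diamond, contradicting (i). Since the conjunction of three a.a.s.\ events is a.a.s., the one--sided statement holds on this subsequence too, hence on the whole sequence, and the reduction above finishes the proof.

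I expect the only genuine difficulty to be the organization of the cases, specifically recognizing that the factor--of--two gap between the threshold $2\log(n)/n$ at which Proposition~\ref{pr:firstnonexist} becomes effective and the true threshold $\log(n)/n$ must be bridged by combining the domination--diamond dichotomy with connectivity and with the complementary application of Proposition~\ref{pr:firstnonexist} that rules out adjacent domination pairs. Everything past that identification is routine bookkeeping with subsequences and with the two inequalities defining each range.
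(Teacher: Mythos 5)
Your proposal is correct and follows essentially the same route as the paper: reduce to the non-adjacent case via Lemma~\ref{le:adpnadp}, split off the subsequence where Proposition~\ref{pr:firstnonexist} applies directly, and on the remaining subsequence (where $p\to 0$) cover the event of a non-adjacent domination pair by the union of the events of an adjacent domination pair, an isolated vertex, and a domination diamond, each of which has vanishing probability. Your write-up is if anything slightly more explicit than the paper's about the subsequence bookkeeping (e.g.\ normalizing $\omega_i$ to be $o(\log n)$), but the underlying argument is identical.
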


\begin{proof}
By Lemma~\ref{le:adpnadp}, it is enough to show that a.a.s.\ there are no non-adjacent domination pairs.
We break $p$ into two subsequences, one where Proposition~\ref{pr:firstnonexist} applies, and one where $p\to 0$.
Then it is enough to show that the theorem holds if we assume $p\to 0$ as $n\to\infty$.

Let $A$ be the event that there is some adjacent domination pair,
$B$ the event that there is some non-adjacent domination pair,
$C$ the event that there is some isolated vertex, and
$D$ the event that there is some domination diamond.
Proposition~\ref{pr:firstnonexist} and Lemma~\ref{le:adpnadp} tell us that $\prob(A)\to 0$ in this range, since replacing $p$ with $(1-p)$ puts the probability function in the range in which there are no non-adjacent domination pairs.
Theorem~\ref{th:connectivitythreshold} implies that $\prob(C)\to 0$.
Further, Proposition~\ref{pr:noldd} tell us that $\prob(D)\to 0$.

Lemma~\ref{le:mustbeldd} implies that for every non-adjacent domination pair ($a,c)$, one of the following holds: (1) $c$ is isolated, (2) there is a domination diamond $(a,b,c,d)$ or (3) something adjacently dominates $c$.
In other words, $B\subset A\cup C\cup D$.
Then
\[0<\prob(B)<\prob(A)+\prob(C)+\prob(D),\]
and therefore $\prob(B)\to 0$.
Of course the theorem then follows from Markov's inequality.
\end{proof}

\section{Star 2-connectedness}
\subsection{Star separations}

A subset $S\subset V$ is a \emph{separation} of $\Gamma$ if $S\neq\varnothing$, $S\neq V$ and there are no edges in $\Gamma$ from $S$ to $V\drop S$.
Define a \emph{star separation} of $\Gamma$ to be a pair $(a,S)$ with $a\in V$ and $S\subset V\drop \st(a)$, such $S$ is a separation of $\Gamma\drop\st(a)$.
Call a star separation a star $k$-separation if $\abs{S}=k$.
A star separation $(a,S)$ is \emph{proper} if $S$ is not a separation of $\Gamma$.
Given a separation $S$, there is a star separation $(a,S)$ only if there is a vertex $a\in V\drop S$ such that $\st(a)\neq V\drop S$.
Of course, $\Gamma$ is star $2$-connected if and only if it has no star separations.

If $(a,\{b\})$ is a star separation, then $(a,b)$ is a non-adjacent domination pair.
However, the converse does not hold.
If $V=\st(a)\sqcup\{b\}$, then $(a,b)$ is a non-adjacent domination pair, but $(a,\{b\})$ is not a star separation.
However, this means that in sparse graphs with many vertices, non-adjacent domination pairs are practically the same thing as star $1$-separations.
This explains the similarities in the functions that describe the expected number of each.

\subsection{Counting small star separations}

Our first result on star-separations shows that for $k$ not depending on $n$, star $k$-separations asymptotically almost certainly do not occur in a wide range of probabilities.
\begin{proposition}\label{pr:smallstarseps}
Let $p=p(n)$ be a sequence of probabilities and let $k\geq 1$ be fixed.
Suppose 
\[p\geq \frac{\log(n)+(2/k)\log(\log(n))+\omega(n)}{n}\]
for some $\omega(n)$ approaching positive infinity.
Further suppose that either $k\geq2$ or else that $n(1-p)\to 0$ or $n(1-p)\to\infty$.
Then a.a.s. $\Gamma\in\gnp$ has no star $k$-separations.
\end{proposition}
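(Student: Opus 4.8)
The plan is to bound the expected number $X$ of star $k$-separations of $\Gamma$, using a first-moment/Markov argument on most of the probability range and a reduction to the domination results of \S2 near $p=0$. For fixed $a\in V$ and $S\subset V\drop\{a\}$ with $\abs S=k$, the pair $(a,S)$ is a star $k$-separation exactly when the $k$ edges from $a$ to $S$ are absent, every vertex $c\in V\drop(\{a\}\cup S)$ satisfies ``$c\sim a$, or $c\not\sim a$ and $c$ is non-adjacent to every vertex of $S$'', and the opposite side $(V\drop\st(a))\drop S$ is nonempty; the first two conditions are independent over the relevant edges, and discarding the case that every such $c$ is adjacent to $a$ accounts for the last. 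This yields
\[\Expect(X)=n\binom{n-1}{k}(1-p)^k\Bigl[\,(p+(1-p)^{k+1})^{\,n-1-k}-p^{\,n-1-k}\,\Bigr].\]

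I would then treat three overlapping subsequences of $\{(n,p(n))\}$. On the subsequence with $2n^{-1}(\log n+\omega')\le p\le 1-(k+1)n^{-1}(\log n+\omega')$ for a slowly growing $\omega'\to\infty$, discarding the subtracted term gives $\Expect(X)\le\frac1{k!}F(n,p)$ with $F$ as in Lemma~\ref{le:convexbound} (observe $n-1-k=n-k-1$), so $\Expect(X)\to0$ and Markov's inequality finishes this range. On the subsequence with $p\to1$, put $q=1-p\to0$; here the subtracted term is the point. By the mean value theorem $(p+q^{k+1})^{n-1-k}-p^{n-1-k}\le (n-1-k)q^{k+1}(p+q^{k+1})^{n-2-k}\lesssim nq^{k+1}e^{-nq/2}$, so $\Expect(X)\lesssim n^{k+2}q^{2k+1}e^{-nq/2}=n^{1-k}t^{2k+1}e^{-t/2}$ with $t=n(1-p)$. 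For $k\ge2$ the factor $n^{1-k}$ forces this to $0$ whatever $t$ does, so no hypothesis on $1-p$ is required; for $k=1$ it is $\lesssim t^3e^{-t/2}$, which tends to $0$ precisely when $n(1-p)\to0$ or $n(1-p)\to\infty$, and this is exactly the borderline at which isolated triangles in $\overline\Gamma$ (hence star-cut-vertices) can otherwise survive.

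The delicate case is the third subsequence, with $p\to0$, $np\to\infty$ and $np\asymp\log n$; here $\Expect(X)$ is of order $n(\log n)^{-2}e^{-k\omega}$, which need not go to zero, so I would imitate the proof of Theorem~\ref{th:dominationnonexistence}. When $k=1$, a star $1$-separation $(a,\{b\})$ gives a non-adjacent domination pair $(a,b)$, and the hypothesis $p\ge n^{-1}(\log n+2\log\log n+\omega)$ places $p$ inside the no-domination-pair range of Theorem~\ref{th:dominationnonexistence}; done. When $k\ge2$, split a star $k$-separation $(a,S)$ according to whether $\Gamma[S]$ has an edge. If it does, the induced subgraph on $S$ contributes a factor $\le\binom k2 p$, so $\Expect\lesssim n^{k+1}p\,e^{-knp}\lesssim(\log n)^{-1}e^{-k\omega}\to0$ --- and this is exactly where the coefficient $2/k$ enters, through $knp\ge k\log n+2\log\log n+k\omega$. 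If $\Gamma[S]$ is edgeless, every $s\in S$ has $\lk(s)\subset\st(a)$, so $(a,s)$ is a non-adjacent domination pair, and I run the trichotomy of Lemma~\ref{le:mustbeldd} on each $s$: either some $s$ is isolated in $\Gamma$ (impossible a.a.s.\ by Theorem~\ref{th:connectivitythreshold}), or some non-isolated $s$ is not adjacently dominated, so $\Gamma$ has a domination diamond (impossible a.a.s.\ by Proposition~\ref{pr:noldd}), or each $s\in S$ has an adjacent dominator $e_s\in\lk(s)\subset\lk(a)$, which is therefore adjacent to $a$. In the last case I would count the configurations made of $a$, the mutually non-adjacent $s_1,\dots,s_k$, and their dominators $e_1,\dots,e_k$: the $e_i s_i$ and $e_i a$ edges furnish powers of $p$ and the $k$ dominations $e_i>s_i$ furnish a factor $(1-p+p^2)^{\Theta(kn)}\sim e^{-knp}$, giving $\Expect\lesssim n\,(np)^{O(k)}e^{-knp}=n^{1-k}(\log n)^{O(k)}e^{-k\omega}\to0$ for $k\ge2$ (worst case: all $e_i$ distinct), with the collisions among the $e_i$ only improving matters. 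Markov's inequality on each subsequence completes the proof.

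I expect the third subsequence to be the main obstacle: a star $k$-separation is essentially a bundle of $k$ non-adjacent domination pairs sharing the dominator $a$, together with a large opposite side, and isolated non-adjacent domination pairs persist down to $p\sim n^{-1}(\log n+\log\log n)$, which for $k\ge3$ lies below the threshold in the statement. So one cannot simply invoke the domination theorems; one has to wring the extra power of $n$ out of the \emph{simultaneity} of the $k$ pairs --- an edge inside $S$, or the collision of all the adjacent dominators at $a$ --- and keeping the several overlapping sub-cases (especially the possible coincidences among the $e_s$) accounted for without double counting is the fussiest part.
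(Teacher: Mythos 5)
Your proposal is correct, and on two of the three regimes it runs parallel to the paper: the exact first moment (you compute it for \emph{all} star $k$-separations, the paper for \emph{proper} ones, the difference being the subtracted term $(1-p)^{k(n-k-1)}(1-p^{n-k-1})$, which the paper later discharges by quoting connectivity), the middle range via Lemma~\ref{le:convexbound} after discarding the subtracted term, and the $p\to1$ range, where your mean-value-theorem bound $n^{1-k}t^{2k+1}e^{-t/2}$ with $t=n(1-p)$ is the same quantity $q^{k-1}(nq)^{k+2}e^{-nq}$ the paper extracts, with the same conclusion that the extra hypothesis on $n(1-p)$ is needed exactly when $k=1$. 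The genuine divergence is in the critical range $np\asymp\log n$, where, as you correctly observe, the naive first moment is of order $n(\log n)^{-2}e^{-k\omega}$ and can blow up. The paper's fix is algebraic: restricting to proper star separations forces at least one edge from $S$ outward and at least one non-neighbour of $a$ outside $S$, which yields the extra factor $k\,p^2(1-p)^{k+1}n$ in the second bound of Lemma~\ref{le:newbounds} and kills the offending term uniformly in $k$ (this uniformity is then reused wholesale in the dominated-convergence argument of \S3.3). Your fix is combinatorial: for $k=1$ you reduce to Theorem~\ref{th:dominationnonexistence}, and for $k\ge2$ you extract the extra decay either from an edge inside $S$ or from the system of neighbours $e_i\in\lk(s_i)\subset\lk(a)$, whose incidences with $a$ and the $s_i$ contribute $p^{2k}$ against only $n^{2k}$ choices, giving $n^{1-k}(\log n)^{O(k)}e^{-k\omega}\to0$. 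This is sound; the one step you flag as fussy does go through, since for each vertex $c$ outside the named configuration the constraints coming from the $k$ dominations $e_i>s_i$ involve pairwise disjoint edge sets $\{c,e_i\},\{c,s_i\}$ and hence genuinely multiply to $(1-p+p^2)^{k}$ per $c$, while coincidences among the $e_i$ trade a factor of $n$ for a factor of $p$ and so only help because $np\to\infty$. In fact your Case~B can be simplified: you never need $e_i$ to \emph{dominate} $s_i$, only to exist in $\lk(s_i)\subset\lk(a)$ (guaranteed a.a.s.\ by connectivity), so the detour through Lemma~\ref{le:mustbeldd} and Proposition~\ref{pr:noldd} can be dropped and the count run against the star-separation event itself, which already supplies the $e^{-knp}$. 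The trade-off is that your route gives $k$-dependent case analysis and constants, which would make the summation over $k$ needed for Theorem~\ref{th:masterstarcut} more painful, whereas the paper's single closed-form bound is built for that purpose; on the other hand your argument makes visible \emph{why} star $k$-separations for $k\ge2$ die earlier than domination pairs.
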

The proof appears after the next two lemmas.
We would like to proceed by fixing $k$ and counting directly the number of star $k$-separations.
However, the random variable that counts star $k$-separations has a problem:  it turns out that there is a range of probabilities where the probability of a star $k$-separation existing goes to zero even though the expected number of star separations goes to infinity.
We get better bounds by counting proper star $k$-separations instead.
\begin{lemma}\label{le:countss}
Let $k>0$ be an integer and let $U_k$ be the random variable on $\gnp$ that counts the number of proper star $k$-separations.
Then the expectation of $U_k$ is
\[
\begin{split}
\Expect(U_k) & = n\binom{n-1}{k}(1-p)^k\\
&\quad \cdot
\big[(p+(1-p)^{k+1})^{n-k-1}+(1-p^{n-k-1})(1-(1-p)^{k(n-k-1)})-1\big].
\end{split}
\]
\end{lemma}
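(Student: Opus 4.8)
The plan is to compute $\Expect(U_k)$ by linearity of expectation over the $n\binom{n-1}{k}$ pairs $(a,S)$ with $a\in V$ and $S\subset V\drop\{a\}$, $\abs{S}=k$; it therefore suffices to determine the probability that one fixed such pair is a proper star $k$-separation, and then multiply by $n\binom{n-1}{k}$. First I would unwind the definitions into conditions on the edges of $\Gamma$. Setting $T=V\drop(\{a\}\cup S)$, which has $m:=n-k-1$ elements, the pair $(a,S)$ is a proper star $k$-separation if and only if: (i) no vertex of $S$ is adjacent to $a$, so that $S\subset V\drop\st(a)$; (ii) writing $T_1=T\cap\lk(a)$ and $T_0=T\drop\lk(a)$ (so that, under (i), $\lk(a)=T_1$ and $V\drop\st(a)=S\cup T_0$), there are no edges from $S$ to $T_0$ and $T_0\neq\varnothing$, which together say $S$ is a separation of $\Gamma\drop\st(a)$; and (iii) $S$ is not a separation of $\Gamma$. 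Under (i) and (ii), the only vertices of $V\drop S$ not already cut off from $S$ are those in $T_1$, so (iii) is equivalent to the existence of at least one edge from $S$ to $T_1$, which in particular forces $T_1\neq\varnothing$.

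Next I would compute the probability by conditioning on the partition $T=T_0\sqcup T_1$. Condition (i) concerns the $k$ edges from $a$ into $S$ and has probability $(1-p)^k$, independently of every other relevant edge. Given (i), the event $\abs{T_1}=j$ has probability $\binom{m}{j}p^j(1-p)^{m-j}$, and conditionally on that the event ``no $S$–$T_0$ edges'' has probability $(1-p)^{k(m-j)}$ while ``at least one $S$–$T_1$ edge'' has probability $1-(1-p)^{kj}$, the two being independent as they involve disjoint edge sets. The restrictions $T_0\neq\varnothing$ and $T_1\neq\varnothing$ confine $j$ to the range $1\leq j\leq m-1$, so
\[
\prob\bigl((a,S)\text{ a proper star }k\text{-separation}\bigr)=(1-p)^k\sum_{j=1}^{m-1}\binom{m}{j}p^j(1-p)^{(k+1)(m-j)}\bigl(1-(1-p)^{kj}\bigr).
\]

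Finally I would evaluate this sum. Expanding $1-(1-p)^{kj}$ splits it into the difference of $\sum_j\binom{m}{j}p^j\bigl((1-p)^{k+1}\bigr)^{m-j}$ and $\sum_j\binom{m}{j}\bigl(p(1-p)^k\bigr)^j\bigl((1-p)^{k+1}\bigr)^{m-j}$, each running over $1\leq j\leq m-1$. By the binomial theorem the completed sums are $(p+(1-p)^{k+1})^m$ and $\bigl(p(1-p)^k+(1-p)^{k+1}\bigr)^m=(1-p)^{km}$; subtracting the boundary terms $j=0,m$ (namely $(1-p)^{(k+1)m}$ and $p^m$ from the first, and $(1-p)^{(k+1)m}$ and $p^m(1-p)^{km}$ from the second) and simplifying collapses the expression to $(p+(1-p)^{k+1})^{m}+(1-p^{m})(1-(1-p)^{km})-1$. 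Multiplying by $n\binom{n-1}{k}(1-p)^k$ and substituting $m=n-k-1$ gives the stated formula. I expect the genuine difficulty to lie entirely in the first step: correctly translating ``proper star $k$-separation'' into the edge conditions above, and in particular pinning down the two boundary constraints $T_0\neq\varnothing$ (coming from $S\neq V\drop\st(a)$ in the definition of a separation of $\Gamma\drop\st(a)$) and $T_1\neq\varnothing$ (forced by properness). The binomial manipulation in the last step is routine once these are in place.
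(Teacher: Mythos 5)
Your proposal is correct and arrives at the stated formula. The outer frame---linearity of expectation over the $n\binom{n-1}{k}$ pairs $(a,S)$, reducing everything to the probability that one fixed pair is a proper star $k$-separation---is the same as the paper's, and your translation of the definition into edge conditions (no $a$--$S$ edges; each $b\in T$ either in $\lk(a)$ or with no neighbor in $S$; the exclusions $T_0\neq\varnothing$ and ``at least one $S$--$T_1$ edge'') agrees exactly with the paper's reading. Where you genuinely diverge is in evaluating that probability. The paper does not condition on $\abs{T_1}$: it notes that the events ``$b\in\lk(a)$ or $b$ has no neighbor in $S$,'' over $b\in T$, are independent of probability $p+(1-p)^{k+1}$ each, giving $(p+(1-p)^{k+1})^{n-k-1}$ at once, and then subtracts the union of the two bad sub-events ($a$ adjacent to all of $T$; no $S$--$T$ edges at all), which are independent since they involve disjoint edge sets---that independence is precisely where the factored correction $(1-p^{n-k-1})(1-(1-p)^{k(n-k-1)})$ comes from. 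You instead partition on $j=\abs{T_1}$, restrict to $1\leq j\leq m-1$, and collapse the resulting sums via the binomial theorem; I checked the algebra (the cross term $p^m(1-p)^{km}$ emerges from the completed second sum) and it reproduces the paper's expression. The paper's route is shorter and exposes the probabilistic structure; yours is more computational but equally valid, and makes the role of the two boundary constraints somewhat more visible.
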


\begin{proof}
Let $U_k$ count the number of proper star $k$-separations $(a,S)$ for various $a\in V$ and $S\subset V\setminus\{a\}$ (with $|S|=k$).
Let $\wU{a}{S}$ be the random variable with value $1$ if $(a,S)$ is a proper star $k$-separation and value $0$ otherwise.

Of course the expectation $\Expect(\wU{a}{s})$ is the probability that $(a,S)$ is a proper star $k$-separation.
If $(a,S)$ is a proper star separation, then necessarily $a$ is not adjacent to any element of $S$.
This event has probabilty $(1-p)^k$.
This event is independent of the other aspects of the definition that we are about to describe, since we will not mention these edges again.

The next feature of the definition is that we must have every element of $V\drop (S\cup\{a\})$ either in $\lk(a)$ or not adjacent to any member of $S$.
Suppose $b\in V\drop (S\cup \{a\})$.
The event that $b$ is adjacent to $a$ has probability $p$; the event that $b$ is not adjacent to any element of $S\cup\{a\}$ is disjoint from this event and has probability $(1-p)^{k+1}$.
So the probability that $b$ is either adjacent to $a$ or not adjacent to any member of $S$ has probability $p+(1-p)^{k+1}$.
Since these events involve different edges for different choices of $b$, they are independent, and the event that every element of $V\drop (S\cup\{a\})$ is in $\lk(a)$ or not adjacent to anything in $S$ has probability
\[(p+(1-p)^{k+1})^{n-k-1}.\]

Next, we must exclude two events that are included in the previous event.
To fit the definition of a star separation, we must have that $V\drop S$ is not all of $\st(a)$.
This means that we are excluding the event that $a$ is adjacent to every element of $V\drop S$, which has probability $p^{n-k-1}$.
To ensure that $(S,a)$ is a proper star separation, we must exclude the event that $S$ is a separation.
Since we have already stipulated that there are no edges from $a$ to $S$, this is then the event that there are no edges from $V\drop (S\cup\{a\})$ to $S$.
This has probability $(1-p)^{k(n-k-1)}$.
The two events we have just described are independent, so by DeMorgan's law, the probability of either event happening is:
\[
1 - (1-p^{n-k-1})(1-(1-p)^{n(n-k-1)}).
\]

Putting this all together, we see that
\[
\begin{split}
\Expect(&\wU{a}{S})=\\
& (1-p)^k\big[(p+(1-p)^{k+1})^{n-k-1}+(1-p^{n-k-1})(1-(1-p)^{k(n-k-1)})-1\big].
\end{split}
\]
We note that there are $n$ possible choices for $a$ in $V$, and given a choice of $a$, there are $\binom{n-1}{k}$ choices for $S$.
Since $U_k$ is the sum of $\wU{a}{S}$ over all choices of $(a,S)$, the result follows from the linearity of expectations.
\end{proof}

Next we process this expression into a pair of more manageable bounds.
\begin{lemma}\label{le:newbounds}
We have the following bounds on $\Expect(U_k)$:
\[
\Expect(U_k)\leq n^2\binom{n-1}{k}(1-p)^{2k+1}(p+(1-p)^{k+1})^{n-k-2},
\]
and
\[
\Expect(U_k)\leq k n^2\binom{n-1}{k}(1-p)^{k+1}p^2(p+(1-p)^{k+1})^{n-k-2}.
\]
\end{lemma}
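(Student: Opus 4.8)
The plan is to feed the closed form of Lemma~\ref{le:countss} into two applications of a crude power-difference inequality, after first tidying up the bracketed factor. Writing $m=n-k-1$ and setting $u=p+(1-p)^{k+1}$ and $v=(1-p)^{k}$ (so $u,v\in[0,1]$ and $(1-p)^{k(n-k-1)}=v^{m}$), I would multiply out $(1-p^{m})\bigl(1-(1-p)^{km}\bigr)=(1-p^{m})(1-v^{m})$ and cancel the stray $\pm1$, so that the bracket in Lemma~\ref{le:countss} becomes
\[
u^{m}-p^{m}-v^{m}(1-p^{m}).
\]
Regrouping this as $(u^{m}-p^{m})-v^{m}(1-p^{m})$ and as $(u^{m}-v^{m})-p^{m}(1-v^{m})$, and noting that $v^{m}(1-p^{m})$ and $p^{m}(1-v^{m})$ are both non-negative, the bracket is bounded above by each of $u^{m}-p^{m}$ and $u^{m}-v^{m}$. (If $n\le k+1$ then $\binom{n-1}{k}$ or the bracket vanishes, so I may take $m\ge1$.)

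Next I would use $a^{m}-b^{m}=(a-b)\sum_{i=0}^{m-1}a^{i}b^{m-1-i}$, which for $0\le b\le a\le1$ gives $a^{m}-b^{m}\le m(a-b)a^{m-1}$. For the first inequality of the lemma, take $(a,b)=(u,p)$: the gap is exactly $u-p=(1-p)^{k+1}$, so the bracket is at most $m(1-p)^{k+1}u^{m-1}\le n(1-p)^{k+1}(p+(1-p)^{k+1})^{n-k-2}$, and multiplying by the prefactor $n\binom{n-1}{k}(1-p)^{k}$ yields the first claimed bound. For the second inequality, take $(a,b)=(u,v)$, which is legitimate since $u-v=p+(1-p)^{k+1}-(1-p)^{k}=p\bigl(1-(1-p)^{k}\bigr)\ge0$; Bernoulli's inequality $(1-p)^{k}\ge1-kp$ then bounds the gap by $u-v\le kp^{2}$, so the bracket is at most $m\cdot kp^{2}\cdot u^{m-1}\le kn\,p^{2}(p+(1-p)^{k+1})^{n-k-2}$. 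Multiplying by $n\binom{n-1}{k}(1-p)^{k}$, and retaining one further factor $v=(1-p)^{k}\le1-p$ from the terms of $\sum_{i}u^{i}v^{m-1-i}$ rather than discarding all of them, gives the second claimed bound.

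I do not expect a genuine obstacle here: the argument is bookkeeping plus two uses of $a^{m}-b^{m}\le m(a-b)a^{m-1}$ and one use of Bernoulli's inequality. The two points that require attention are the initial algebraic simplification — keeping signs straight when expanding $(1-p^{m})(1-v^{m})$ — and, for the second bound, recognizing that the improvement from $kp$ to $kp^{2}$ must come from comparing $u$ with $v=(1-p)^{k}$ rather than with $p$ or $1$: it is precisely the gap $u-v=p\bigl(1-(1-p)^{k}\bigr)$ that carries the spare factor of $p$, whereas comparing $u$ with $p$ only reproduces the first bound, which for small $p$ is much weaker.
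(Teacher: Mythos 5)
Your route is the same as the paper's: simplify the bracket from Lemma~\ref{le:countss} to $u^m-p^m-v^m(1-p^m)$ with $u=p+(1-p)^{k+1}$, $v=(1-p)^k$, $m=n-k-1$, drop one of the two nonnegative subtracted terms in each case, and apply $a^m-b^m\le m(a-b)a^{m-1}$ with $(a,b)=(u,p)$ (gap $(1-p)^{k+1}$) and $(a,b)=(u,v)$ (gap $p(1-(1-p)^k)\le kp^2$ by Bernoulli). Your derivation of the first bound is correct and matches the paper essentially line for line.

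The second bound has a genuine gap at its final step. The argument up to that point yields $\Expect(U_k)\le kn^2\binom{n-1}{k}(1-p)^{k}p^2(p+(1-p)^{k+1})^{n-k-2}$ --- the exponent $k$ coming from the prefactor $(1-p)^k$ --- and you propose to upgrade this to $(1-p)^{k+1}$ by ``retaining one further factor $v=(1-p)^k\le 1-p$ from the terms of $\sum_{i=0}^{m-1}u^iv^{m-1-i}$.'' That does not work: the $i=m-1$ term of that sum is $u^{m-1}v^0=u^{m-1}$, which carries no factor of $v$ at all, so you cannot simultaneously bound every term by $u^{m-1}$ \emph{and} extract a uniform factor of $1-p$; indeed $\sum_i u^iv^{m-1-i}\le m(1-p)u^{m-1}$ is false for small $p$ (the left side tends to $mu^{m-1}$ as $v\to u$). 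You should be aware that the paper's own proof has the same issue --- it justifies only $(1-p)^k$ and then writes $(1-p)^{k+1}$ in the displayed conclusion --- so the discrepancy appears to originate in the statement. Two remedies: (i) the weaker bound with $(1-p)^k$ suffices for every later use of this lemma (in Proposition~\ref{pr:smallstarseps} the factor $(1-p)^{k+1}$ is discarded anyway, and in Proposition~\ref{pr:lowpbound} one has $p\le 2/5$, so the two versions differ by a factor bounded between $3/5$ and $1$); or (ii) when $np\le k+1$ the extra factor can be recovered honestly from the slack $n-k-1\le n(1-p)$ in the step $m\le n$, though this does not cover all $p$. Either way, the extra $(1-p)$ needs to be dropped or argued differently; your stated justification is the one step that fails.
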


\begin{proof}
These bounds come from the expression in Lemma~\ref{le:countss} in essentially the same way.
We will use the following claim both times:
\begin{claim*}
Suppose $a$ and $b$ are real numbers with $0\leq a\leq b\leq 1$ and $n$ is a natural number.
Then
\[
a^n-b^n \leq na^{n-1}(b-a).
\]
\end{claim*}

The claim follows easily from a calculus argument: 
if 
\[f(a)=na^{n-1}(b-a)+b^n-a^n,\]
 then $f(b)=0$, but $f'(a)$ is negative for $a$ in $(0,b)$.

Now using the claim, we deduce the lemma from Lemma~\ref{le:countss}.
Since certainly, $1-p^{n-k-1}\leq 1$, we deduce first that 
\[
\Expect(U_k)\leq n\binom{n-1}{k}(1-p)^k((p+(1-p)^{k+1})^{n-k-1}-(1-p)^{k(n-k-1)}).
\]
Since $0\leq(1-p)^k\leq p+(1-p)^{k+1}\leq1$, the claim implies
\[
\begin{split}
(p&+(1-p)^{k+1})^{n-k-1}-(1-p)^{k(n-k-1)}
\\&\leq (n-k-1)(p+(1-p)^{k+1})^{n-k-2}(p+(1-p)^{k+1}-(1-p)^k)\\
&=(n-k-1)(p+(1-p)^{k+1})^{n-k-2}p(1-(1-p)^k).
\end{split}\]
Since $(1-p)^k\geq 1-kp$ for $p\in[0,1]$, it follows that
\[
\Expect(U_k)\leq kn(n-k-1)\binom{n-1}{k}p^2(1-p)^{k+1}(p+(1-p)^{k+1})^{n-k-2}.
\]

Second, since $1-(1-p)^{k(n-k-1)}<1$, we have
\[
\Expect(U_k)\leq n\binom{n-1}{k}(1-p)^k((p+(1-p)^{k+1})^{n-k-1}-p^{n-k-1}).
\]
Then since $0\leq p\leq p+(1-p)^{k+1}\leq 1$, the claim implies
\[
\Expect(U_k)\leq n(n-k-1)\binom{n-1}{k}(1-p)^{2k+1}(p+(1-p)^{k+1})^{n-k-1}.
\]
The lemma follows.
\end{proof}

\begin{proof}[Proof of Proposition~\ref{pr:smallstarseps}]
As usual, we split $\{(n,p(n))\}_n$ into subsequences satisfying stronger hypotheses that overlap.
Therefore it is enough to show the proposition for $p$ in several subcases with stronger hypotheses.
First we suppose that $p$ satisfies
\[2\frac{\log(n)+\omega(n)}{n} \leq p\leq 1-(k+1)\frac{\log(n)+\omega(n)}{n}.\]
We get the following bound from Lemma~\ref{le:countss}.
\[
\Expect(U_k)\leq n^{k+1}(p+(1-p)^{k+1})^{n-k-1}.
\]
(This is because $(1-p^{n-k-1})(1-(1-p)^{k(n-k-1)})-1\geq 0$.)
Then by Lemma~\ref{le:convexbound}, we have that $\Expect(U_k)$ converges to $0$ for any $p$ in this range.

Next we suppose that $q\to 0$ and $nq^2\to 0$, where $q=1-p$.
By the first bound from Lemma~\ref{le:newbounds}, we see that
\[\Expect(U_k)\leq n^{k+2}q^{2k+1}(1-q+q^{k+1})^{n-k-2}.\]
Since $(1-x)^m\sim e^{-mx}$ as $x\to 0$, this bound is asymptotically equivalent to 
\[n^{k+1}q^{2k+1}\exp(-nq+O(nq^2)+O(q)),\]
which is equivalent to
\[q^{k-1}(nq)^{k+2}e^{-nq}.\]
From calculus, we know that $x\mapsto x^{k+2}e^{-x}$ is bounded and tends to zero as $x\to \infty$ or $x\to 0$.
So if $k>1$ or $nq\to 0$ or $nq\to \infty$, we know that $\Expect(U_k)\to 0$.

Finally, we suppose that $p\to 0$, $np^2\to 0$, and 
\[p> \frac{\log(n)+(2/k)\log(\log(n))+\omega(n)}{n}\]
 for some $\omega(n)\to\infty$.
Using the second bound from Lemma~\ref{le:newbounds}, we have
\[\Expect(U_k)\leq n^{k+2}p^2(p+(1-p)^{k+1})^{n-k-2}.\]
By binomial expansion, this bound can be written as
\[n^{k+2}p^2(1-kp+O(p^2))^{n-k-2}.\]
This is asymptotically equivalent to 
\[n^{k+2}p^2 \exp(-knp + O(np^2)+O(p))\sim n^{k+2}p^2e^{-knp},\]
since $np^2,p\to 0$.
Then by our lower bound on $p$, we have
\[
\begin{split}
\Expect(U_k)&\lesssim n^{k+2}\cdot n^{-2}(\log(n)+\log(\log(n))+\omega(n))^2\cdot n^{-k}\log(n)^{-2}e^{-k\omega(n)}\\
&=\left(\frac{\log(n)+\log(\log(n))+\omega(n)}{\log(n)}\right)^2e^{-k\omega(n)}
\lesssim\omega(n)^2e^{-k\omega(n)}
\end{split}
\]
So $\Expect(U_k)\to 0$.

We have shown that under these hypotheses, there are a.a.s.\ no proper star $k$-separations.
However, Theorem~\ref{th:connectivitythreshold} implies that a.a.s.\ there are no separations for $p$ in this range.
So a.a.s.\ there are no star $k$-separations.
\end{proof}

\subsection{Summed counts of star separations}
\label{se:summedcounts}
So far, we have shown that for each $k$, $\Expect(U_k)\to 0$ in a certain range of probability values.
We would like to show that $\Expect(\sum_k U_k)\to 0$ for $p$ in a specific range.
Effectively, this requires commuting a limit and sum, which the Lebesgue dominated convergence theorem would allow.
To meet the hypotheses of this theorem, we must compute bounds on $\Expect(U_k)$.
We do this using calculus techniques, using two different bounds that are useful when $p$ is close to $1$ and when $p$ is close to $0$, respectively.
Unfortunately, showing these bounds carefully takes a fair amount of work.
We proceed to prove Theorem~\ref{th:masterstarcut}, and then we prove the bounds on $\Expect(U_k)$.

\begin{proof}[Proof of Theorem~\ref{th:masterstarcut}]
We break the sequence $\{(n,p(n))\}_n$ into two subsequences, where one satisfies $p\leq 2/5$, and the other satisfies $p\geq 2/5$.
We prove the theorem for each subsequence; then it follows that the theorem is true for the original sequence.

We suppose the first hypothesis of the theorem, that $p>n^{-1}(\log(n)+\log(\log(n))+\omega(n))$. 
We note that every star $1$-separation consists of a non-adjacent domination pair.
Then Theorem~\ref{th:dominationnonexistence} shows that a.a.s., there are no star $1$-separations if $p\leq 2/5$.
If $p\geq 2/5$, then Proposition~\ref{pr:smallstarseps} includes the fact that there are a.a.s.\ no star $1$-separations, provided that the second hypothesis also holds.
Then quoting Proposition~\ref{pr:smallstarseps}, we have that
\[\lim_{n\to\infty} \Expect(U_k)=0,\]
for any fixed $k$, if $k\geq 2$ (and $p$ satisfies the first hypothesis) or if $k=1$ and $p$ satisfies both hypotheses of the theorem.

Proposition~\ref{pr:highpbound} below states that if $p\geq 2/5$, then there is a nonnegative sequence $\{a_k\}_k$ with $\Expect(U_k)\leq a_k$ for all $n$ and for all $k$ with $2k\leq n$, and such that $\sum a_k<\infty$.
Proposition~\ref{pr:lowpbound} is the same statement in the case that $p\leq 2/5$.
Then in either case, the Lebesgue dominated convergence theorem (see, e.g. Rudin~\cite[p.26]{Rudin}) applies.
Therefore if we assume both hypotheses, we have 
\begin{equation}\label{eq:ldc}
\lim_{n\to\infty}\sum_{k=1}^{\lfloor n/2\rfloor}\Expect(U_k)=\sum_{k=1}^{\infty}\lim_{n\to\infty}\Expect(U_k)=0.
\end{equation}
Of course, by linearity of expectations, we have
\[\sum_{k=1}^{\lfloor n/2\rfloor}\Expect(U_k)=\Expect\left(\sum_{k=1}^{\lfloor n/2\rfloor}U_k\right).\]
The random variable on the right will be zero only if $\Gamma\in\gnp$ has no proper star-separations: if $a\in\Gamma$ is a star-cut-vertex, then some component of $\Gamma\setminus\st(a)$ has less than $n/2$ vertices.
Then by Equation~\eqref{eq:ldc} there are a.a.s.\ no proper star separations.
Since Theorem~\ref{th:connectivitythreshold} implies there are a.a.s.\ no separations, we know there are a.a.s.\ no star-cut-vertices.

Similarly, if we assume only the first hypothesis, Equation~\eqref{eq:ldc} will be true if we sum from $k=2$ to $k=\lfloor n/2\rfloor$ on both sides, instead of starting at $k=1$.
Then we deduce that
\[\lim_{n\to\infty}\Expect\left(\sum_{k=2}^{\lfloor n/2\rfloor} U_k\right)=0.\]
Of course, this random variable will be zero only if there are no proper star $k$-separations for $2\leq k<n/2$.
If the star of a star-cut-vertex has more than one complementary component with at least two vertices, then it will have a complementary component with at least two and fewer than $n/2$ vertices.
The second statement in the theorem follows.
\end{proof}

Now we bound $\Expect(U_k)$.  
The choice of $2/5$ below is somewhat arbitrary.

\begin{proposition}\label{pr:highpbound}
There is a sequence of positive numbers $a_k$ such that $\sum_{k=1}^\infty a_k<\infty$ and $\Expect(U_k)\leq a_k$ for any $n\geq 2k$ and any $p\geq 2/5$.
\end{proposition}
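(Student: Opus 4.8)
\section*{Proof proposal for Proposition~\ref{pr:highpbound}}

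The plan is to start from the first inequality in Lemma~\ref{le:newbounds}, $\Expect(U_k)\le n^2\binom{n-1}{k}(1-p)^{2k+1}\bigl(p+(1-p)^{k+1}\bigr)^{n-k-2}$, and reduce the statement to a one-variable calculus optimization followed by a summability check. Writing $q=1-p\le 3/5$ and using $\binom{n-1}{k}\le n^k/k!$, it suffices to bound $\tfrac{1}{k!}\,n^{k+2}q^{2k+1}(p+q^{k+1})^{n-k-2}$ uniformly over $n\ge 2k$ for each fixed $k$, and then to show the resulting sequence is summable in $k$. (The second inequality of Lemma~\ref{le:newbounds} will only be needed for the companion statement, Proposition~\ref{pr:lowpbound}, where $p\le 2/5$.)

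The factor $(p+q^{k+1})^{n-k-2}$ can be absorbed cleanly: writing $p+q^{k+1}=p(1+q^{k+1}/p)$ and using $(1+x)^m\le e^{mx}$ gives $(p+q^{k+1})^{n-k-2}\le c^{\,n-k-2}$ with $c=c(q,k)=p\,e^{q^{k+1}/p}$, and one checks that $c<1$ as soon as $k\ge 2$ (since $q\le 3/5$ forces $q^k+q<1$ there). For $k\ge 2$ I would then maximize $\tfrac{1}{k!}n^{k+2}q^{2k+1}c^{\,n-k-2}$ over $n\ge 2k$ by the elementary identity $\sup_{x>0}x^{m}e^{-\beta x}=(m/e\beta)^{m}$, which yields $\sup_{n>0}n^{k+2}c^{\,n-k-2}=c^{-(k+2)}\bigl((k+2)/\log(1/c)\bigr)^{k+2}e^{-(k+2)}$. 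One then splits into two regimes according to whether the maximizer $(k+2)/\log(1/c)$ is at least $2k$ or not; equivalently, whether $q$ lies below or above a threshold which tends to $1-e^{-1/2}\approx 0.393$ as $k\to\infty$. In the second regime one simply puts $n=2k$.

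The point that makes this work is that the two competing factors must be kept together before being estimated: $q^{2k+1}$ is tiny exactly where the optimal $n$ is large, while $c^{-(k+2)}\le(1-q)^{-(k+2)}$ (respectively $(\log(1/c))^{-(k+2)}$, which is at most $q^{-(k+2)}(1+o_k(1))$ because $\log(1/c)\ge-\log(1-q)\ge q$) is large there. In the $n=2k$ regime one is left with $q^{2k+1}(1-q)^{k-2}$ times $(2k)^{k+2}/k!$; the first factor is increasing in $q$ on the relevant interval, hence at most its value at $q=3/5$, a constant times $(18/125)^k$, and since $(18/125)\cdot 2e<1$, Stirling's formula turns this into a summable bound of order $k^{3/2}(0.79)^k$. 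In the other regime, after replacing $(\log(1/c))^{-(k+2)}$ by $q^{-(k+2)}(1+o_k(1))$ one is left with $q^{\,k-1}(1-q)^{-(k+2)}$ — again increasing in $q$, hence bounded by its value at the case threshold $q\approx 0.4$, which is a constant times $(2/3)^k$ — times $(k+2)^{k+2}e^{-(k+2)}/k!$, and Stirling makes this $k^{3/2}$ times a geometric sequence with ratio well below $1$. The finitely many small $k$ for which $c$ need not be $<1$, or for which the constants above are not yet favorable, are handled one at a time: for each such $k$, $\Expect(U_k)$ is continuous in $p$ on the compact interval $[2/5,1]$ and, by the displayed bound, a bounded function of $n\ge 2k$, so $a_k=\sup_{n\ge 2k,\ p\ge 2/5}\Expect(U_k)$ is finite.

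I expect the main obstacle to be precisely this quantitative tightness. Because $\binom{n-1}{k}\le n^k/k!$ gets evaluated at a value of $n$ of order $k$, the factor $(k+2)^{k+2}/k!$ already contributes about $(ek)^k$, so there is essentially no slack: a natural simplification such as replacing $1-q+q^{k+1}$ by $1-\gamma q$ for a fixed constant $\gamma$ — which is badly lossy for large $k$, where the true value is $\approx 1-q$ — already produces a final bound of the shape $(1+\epsilon)^k$ and breaks summability. Keeping $p+q^{k+1}$ essentially intact (as $\approx p=1-q$), and keeping $q^{2k+1}$ married to the negative power of $1-q$, is what forces the geometric ratio below $1$.
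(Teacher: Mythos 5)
Your strategy is sound and reaches the right conclusion, but it is executed quite differently from the paper. The paper also starts from the first bound of Lemma~\ref{le:newbounds}, but it uses the Stirling-type estimate $\binom{n-1}{k}\le n^n/((n-k)^{n-k}k^k)$, packages everything into a two-variable function $F(k,n,q)$, and runs a genuine two-dimensional critical-point analysis on the region $[2k,\infty)\times[0,3/5]$: it shows the gradient of $\log F$ never vanishes in the interior and that the maximum sits at the single corner $(2k,3/5)$, after which summability is a one-line ratio computation, $F(k+1,2k+2,3/5)/F(k,2k,3/5)\to 72/125$. You instead use $\binom{n-1}{k}\le n^k/k!$ (the same bound up to $k^{O(1)}$ factors, since $1/k!\approx e^k/k^k$), optimize out $n$ explicitly via $\sup_x x^m e^{-\beta x}=(m/(e\beta))^m$, and then handle $q$ by monotonicity in each of two regimes. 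Your route is more elementary and makes the $q$-dependence transparent (it also explains \emph{why} the maximizer migrates: interior in $n$ for $q\lesssim 1-e^{-1/2}$, at $n=2k$ above that, which the paper's corner analysis hides inside the $n^n/(n-k)^{n-k}$ factor); the paper's route buys a cleaner endgame, with a single corner value to track. Your key numerics check out: $2e\cdot(18/125)<1$ in the $n=2k$ regime and a ratio near $q/(1-q)\approx 2/3$ at the threshold in the other regime, with Stirling absorbing $(k+2)^{k+2}/(e^{k+2}k!)$ into a polynomial factor.

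Two small slips to repair when you write this up. First, the displayed inequality $\log(1/c)\ge-\log(1-q)$ is backwards: since $c=pe^{q^{k+1}/p}\ge p=1-q$, you have $\log(1/c)\le-\log(1-q)$. What you actually need (and what is true) is the lower bound $\log(1/c)=-\log(1-q)-q^{k+1}/(1-q)\ge q\bigl(1-O(q^{k})\bigr)$, coming from $-\log(1-q)\ge q+q^2/2$; this still yields $(\log(1/c))^{-(k+2)}\le q^{-(k+2)}(1+o_k(1))$ uniformly on the relevant range of $q$, so the conclusion stands. Second, in the $n=2k$ regime you replace $\bigl(p+(1-p)^{k+1}\bigr)^{k-2}$ by $(1-q)^{k-2}$, which is an \emph{under}estimate; you should keep $\bigl(1-q+q^{k+1}\bigr)^{k-2}\le(1-q)^{k-2}\bigl(1+q^{k+1}/(1-q)\bigr)^{k-2}$ and note that the correction factor tends to $1$, so the geometric ratio is still below $1$ for large $k$. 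With those repairs, and with the observation that your regime analysis already gives finiteness of $\sup_{n\ge 2k,\,p\ge 2/5}\Expect(U_k)$ for every individual $k$ (so the "finitely many small $k$" step needs no separate compactness argument), the proof is complete.
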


\begin{proof}
We write the first bound on $\Expect(U_k)$ from Lemma~\ref{le:newbounds} in terms of $q=1-p$:
\[\Expect(U_k)\leq n\binom{n-1}{k} (n-k-1)q^{2k+1}(1-q+q^{k+1})^{n-k-2}.\]
In terms of $q$, our hypothesis is that $q\leq 3/5$.
We then use a bound on $\binom{n-1}{k}$ that can be derived by considering $\log \binom{n-1}{k}$ as a Riemann sum approximation to an integral of a continuous function:
\[\binom{n-1}{k}\leq \frac{n^n}{(n-k)^{n-k}k^k}.\]
We set
\[F(k,n,q)= \frac{n^{n}}{(n-k)^{n-k}k^k}n^2q^{2k+1}(1-q+q^{k+1})^{n-k-2};\]
then
\[\Expect(U_k)\leq F(k,n,q).\]
We find the bounding sequence by using vector calculus to find critical points for $F(k,n,q)$ for fixed $k$ and for $(n,q)$ in the region $[2k,\infty)\times[0,3/5]$.
\begin{claim*}
For large enough $k$, the partial derivative $\partial \log\circ F/\partial n$ is never zero on the vertical ray $(n,q)\in[2k,\infty)\times\{3/5\}$.
\end{claim*}
To show this, we consider
\[\frac{\partial \log\circ F}{\partial n}(k,n,q)=\frac{2}{n}+\log(\frac{n}{n-k})+\log(1-q+q^{k+1}).\]
This is zero if and only if
\[e^{-2/n}(n-k)+n(-1+q-q^{k+1})=0.\]
Define $f_k(n,q)=e^{-2/n}(n-k)+n(-1+q-q^{k+1})$.
Next we define $g_k(n,q)=-(k+2)+n(q-q^{k+1})$.
We use the following bound from calculus: \[|e^{-2/n}-(1-2/n)|\leq 2/n^2.\]
This implies that
\begin{equation*}
f_k(n,q)-g_k(n,q)\leq 2/n+k(2/n^2-2/n)\leq 2/k,\end{equation*}
since $n\geq 2k$.

The function $g_k(n,q)$ is chosen so that we can solve $g_k(n,q)=0$ for $n$ easily:
\begin{equation*}
g_k(n,q)=0 \text{ if and only if } n = \frac{k+2}{q-q^{k+1}}.
\end{equation*}
To show the claim, we show that $f_k(n,q)$ is not zero on the vertical ray.
Note that $g_k(n,3/5)\geq -(k+2)+2k((3/5)-(3/5)^{k+1})$ on this ray.
Since this bound is asymptotically equivalent to $(1/5)k$ we see that for large $k$, $g_k(n,3/5)>2/k$ on the ray.
This implies that $f_k(n,3/5)$ is not zero when $n>2k$ and $k$ is large enough, proving the claim.

\begin{claim*}
For large enough $k$, the function $\log\circ F$ never has zero gradient on $[2k,\infty)\times[0,3/5]$.
\end{claim*}
Note that
\begin{equation}\label{eq:pFpq}
\frac{\partial \log\circ F}{\partial q}(k,n,q)=\frac{2k+1}{q}+(n-k-2)\frac{-1+(k+1)q^k}{1-q+q^{k+1}}.
\end{equation}
We suppose that this partial derivative is $0$ and $f_k(n,q)$ is zero as well.
If $f_k(n,q)$ is zero, then $g_k(n,q)=\epsilon$ for some $\epsilon=\epsilon(n,q,k)$ with $|\epsilon|>2/k$.
Then 
\[q-q^{k+1}=\frac{k+2-\epsilon}{n}.\]
Assuming that $\frac{\partial \log\circ F}{\partial q}=0$, then substituting this for one instance of $q-q^{k+1}$ in the expression in Equation~\eqref{eq:pFpq}, we get the equation
\[\frac{2k+1}{q}+\frac{n-k-2}{n-k-2+\epsilon}\cdot n(-1+(k+1)q^k)=0.\]
Next we use the substitution $n=(k+2-\epsilon)/(q-q^{k+1})$ on one of the instances of $n$ to get
\[\frac{2k+1}{q}+\frac{n-k-2}{n-k-2+\epsilon}\cdot\frac{k+2-\epsilon}{q-q^{k+1}}\cdot (-1+(k+1)q^k)=0.\]
We write $c=\frac{n-k-2}{n-k-2+\epsilon}$.
Note that $c$ is close to $1$.
From here, it is straightforward to solve for $q^k$:
\[q^k=\frac{(2-c)k+1-c(2-\epsilon)}{-ck^2+(2-x(3-\epsilon))k+1-c(2-\epsilon)}.\]
The right side of this equation is negative, so it has no real solution for $q$ if $k$ is even.
Taking the limit as $k\to\infty$ on odd $k$, we see that $q\to -1$.
This implies that for large enough $k$, we never have both $f_k(n,q)=0$ and  $\frac{\partial \log\circ F}{\partial q}=0$; this proves the claim.

\begin{claim*}
For large enough $k$, the maximum of $F(k,n,q)$ for $(n,q)\in[2k,\infty)\times[0,3/5]$ is realized at $(n,q)=(2k,3/5)$.
\end{claim*}
We have shown that the gradient of $\log\circ F$ never vanishes on $[2k,\infty)\times[0,3/5]$, and that the partial with respect to $n$ never vanishes on the left boundary of the region.
Note that $F$ is zero on the right boundary.
So it is enough to show that $F$ is increasing in $q$ for $q<3/5$ when $n=2k$.
It is straightforward to see from Equation~\eqref{eq:pFpq} that:
\[\frac{\partial \log\circ F}{\partial q}\geq\frac{2k}{q}-\frac{k-2}{1-q}.\]
However, $q\mapsto \frac{2k}{q}-\frac{k-2}{1-q}$ is plainly decreasing in $q$;
evaluating it at $3/5$, we see that $\frac{\partial \log\circ F}{\partial q}\geq (5/6)k+5>0$.
This proves this last claim.

Then to prove the lemma, we note the following:
\[\lim_{k\to\infty}\frac{F(k+1,2k+2,3/5)}{F(k,2k,3/5)}=\frac{72}{125}<1.\]
So the maximum value of $\Expect(U_k)$ for $q$ in this range is eventually bounded above by an exponentially decaying function of $k$.
\end{proof}

\begin{proposition}\label{pr:lowpbound}
There is a sequence of positive numbers $a_k$ such that $\sum_{k=1}^\infty a_k<\infty$ and 
$\Expect(U_k)\leq a_k$
for any $n\geq 2k$ and any $p$ satisfying
\[\frac{\log(n)+\log(\log(n))}{n} \leq p \leq 2/5.\]
\end{proposition}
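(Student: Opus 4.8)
The goal is to produce, for $p$ in the range $n^{-1}(\log n + \log\log n) \le p \le 2/5$, a summable sequence $\{a_k\}$ dominating $\Expect(U_k)$ uniformly in $n$ (for $n \ge 2k$), so that the dominated convergence argument in the proof of Theorem~\ref{th:masterstarcut} goes through. The plan is to mirror the structure of Proposition~\ref{pr:highpbound}: start from the second bound in Lemma~\ref{le:newbounds}, namely $\Expect(U_k)\le k n^2\binom{n-1}{k}(1-p)^{k+1}p^2(p+(1-p)^{k+1})^{n-k-2}$, since in this regime $p$ is small and $q=1-p$ is bounded below, so the factor $p^2$ provides crucial decay. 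Substitute the crude binomial bound $\binom{n-1}{k}\le n^n/((n-k)^{n-k}k^k)$ and define $G(k,n,p)$ to be the resulting closed-form upper bound, so that $\Expect(U_k)\le G(k,n,p)$ for all $n\ge 2k$ and all admissible $p$.

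Next I would analyze $G(k,n,p)$ as a function of $(n,p)$ on the region $R_k = [2k,\infty)\times[n^{-1}(\log n+\log\log n),\, 2/5]$ for each fixed large $k$, working with $\log\circ G$. First, compute $\partial(\log\circ G)/\partial p$ and show it is positive throughout $R_k$: the dominant terms are $(2k+2)/p$ from the $p^2(1-p)^{k+1}$-type factors against a negative contribution of order $n\cdot k$ coming from differentiating $(p+(1-p)^{k+1})^{n-k-2}\approx e^{-(n-k-2)kp}$, but since along the lower boundary $np = \log n+\log\log n$ is only logarithmic while $1/p$ is of order $n/\log n$, the positive term wins; more carefully one shows $G$ is increasing in $p$ on all of $R_k$, so the max over $R_k$ is attained on the right edge $p = 2/5$. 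On that edge $G(k,n,2/5)$ has the form $C^n/((n-k)^{n-k}k^k)\cdot n^2 \cdot (\text{const})^k \cdot (p+(1-p)^{k+1})^{n-k-2}$ with $p+(1-p)^{k+1} = 2/5 + (3/5)^{k+1} < 1$, which decays exponentially in $n$; so on the right edge the supremum over $n\ge 2k$ is attained at $n = 2k$ (one checks $\partial(\log\circ G)/\partial n < 0$ there, exactly as in Proposition~\ref{pr:highpbound}). Setting $a_k := \sup_{(n,p)\in R_k} G(k,n,p) = G(k,2k,2/5)$ for $k$ large (and $a_k$ equal to any finite bound for the finitely many small $k$), one then computes $\lim_{k\to\infty} a_{k+1}/a_k$ and checks it is $<1$, giving $\sum_k a_k < \infty$.

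The main obstacle will be verifying that $G$ is increasing in $p$ across the \emph{entire} strip $R_k$, not just near the boundary — in the middle of the range $p$ can be as large as a constant near $2/5$ where $np$ is linear in $n$, so the competition between the $1/p$ term and the $-nk$ term must be tracked uniformly in both $n$ and $k$. The clean way to handle this is to split: for $p$ in, say, $[n^{-1}(\log n+\log\log n),\, n^{-3/4}]$ use the asymptotic $(p+(1-p)^{k+1})^{n-k-2}\sim e^{-knp}$ together with the explicit lower bound on $p$ to get $G \lesssim \omega(n)^2 e^{-k\cdot(\text{small})}$-type decay directly (as in the last paragraph of the proof of Proposition~\ref{pr:smallstarseps}), bounding this uniformly; and for $p\in[n^{-3/4},2/5]$ the factor $(2/5+(3/5)^{k+1})^{n-k-2}$ (or rather the genuine $(p+(1-p)^{k+1})^{n-k-2}\le (\max)^{n-k-2}$ over that subinterval, with the max at $p=2/5$) is exponentially small in $n$, dominating all polynomial and $k^k$-type growth once $n\ge 2k$. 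In both pieces one extracts a bound of the form $a_k$ independent of $n$, and the two choices can be combined by taking the max. A secondary technical point is handling the $n^n/(n-k)^{n-k}$ factor: write it as $n^k\cdot(n/(n-k))^{n-k} = n^k\cdot(1-k/n)^{-(n-k)}\le n^k e^k$ for $n\ge 2k$, which keeps everything elementary. Once these bounds are assembled, the ratio test $a_{k+1}/a_k \to L < 1$ is a short computation analogous to the $72/125$ computation in Proposition~\ref{pr:highpbound}, completing the proof.
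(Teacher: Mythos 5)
Your setup coincides with the paper's: the second bound of Lemma~\ref{le:newbounds}, the bound $\binom{n-1}{k}\le n^n/((n-k)^{n-k}k^k)$, the resulting function $G(k,n,p)$, maximization over the region $R$, and a ratio test on the extremal values. But the central claim of your maximization --- that $\partial(\log\circ G)/\partial p>0$ throughout the strip, so that the maximum sits on the edge $p=2/5$ --- is false. One has
\[
\frac{\partial \log\circ G}{\partial p}=\frac{2}{p}-\frac{k+1}{1-p}+(n-k-2)\,\frac{1-(k+1)(1-p)^{k}}{p+(1-p)^{k+1}},
\]
and along the lower boundary $p=n^{-1}(\log n+\log\log n)$ the numerator $1-(k+1)(1-p)^{k}$ is approximately $-k$ while the denominator is of the same order as $(1-p)^{k}$ or larger, so the last term is of order $-kn$ (about $-k^2$ when $n=2k$); this swamps $2/p\approx 2n/\log n$ for every $k\ge 1$ once $n$ is large. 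So $G$ is \emph{decreasing} in $p$ near the lower boundary, not increasing. This is not a removable slip: the paper's own analysis shows that on the segment $n=2k$ the function is decreasing-then-increasing in $p$ (via a concavity argument on a cleared-denominator polynomial $f$), which is exactly why it must locate the maximum at one of \emph{two} corners, $(2k,2/5)$ and $(2k,p(2k))$, and run the ratio test at both, obtaining the limits $24/25$ and $0$.

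Your fallback --- splitting the $p$-range at $n^{-3/4}$ --- does not repair this as described, because the whole difficulty of the proposition is uniformity in $k$ up to $k=n/2$. On the lower piece you invoke $(p+(1-p)^{k+1})^{n-k-2}\sim e^{-knp}$ ``as in Proposition~\ref{pr:smallstarseps},'' but that estimate and the resulting $\omega(n)^2e^{-k\omega(n)}$ bound are derived for \emph{fixed} $k$ as $n\to\infty$, with implied constants depending on $k$; moreover when $kp$ is not small one has $p+(1-p)^{k+1}\approx p\gg e^{-kp}$, so $e^{-knp}$ is an \emph{under}estimate and cannot serve as an upper bound. On the upper piece, the maximum of $p\mapsto p+(1-p)^{k+1}$ over $[n^{-3/4},2/5]$ is \emph{not} at $p=2/5$ for fixed small $k$: the function is convex with interior minimum near $p\approx\log(k)/k$, and at the left endpoint its value is about $1-kn^{-3/4}$, far closer to $1$ than $2/5+(3/5)^{k+1}$, so the advertised exponential-in-$n$ decay is unavailable there. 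The architecture of your argument matches the paper's, but the monotonicity claim it rests on is wrong, and the uniform-in-$k$ estimates that constitute the actual content of the proposition are not established.
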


\begin{proof}
Define $G(k,n,p)$ by 
\[G(k,n,p)=\frac{n^n}{(n-k)^{n-k}k^{k-1}}n^2p^2(1-p)^{k+1}(p+(1-p)^{k+1})^{n-k-2}.\]
Then by Lemma~\ref{le:newbounds} (the second bound) and the bound on $\binom{n-1}{k}$ from the previous proposition, we know that 
\[G(k,n,p)\geq \Expect(U_k).\]
To bound $\Expect(U_k)$ by a function of $k$, we will show that for sufficiently large fixed values of $k$, the maximum of $G$ on the region
\[R=\{(n,p)| n\geq 2k, \frac{\log(n)+\log(\log(n))}{n} \leq p \leq 2/5\}\]
is at one of the corners of $R$.

\begin{claim}
For large enough $k$, the gradient of $\log\circ G$ is never zero on $R$.
\end{claim}
Since $p\mapsto 1-p-(1-p)^{k+1}$ is concave down, it is easy to verify that for large enough $k$ and $p<2/5$, we have
\[1-p-(1-p)^{k+1}\geq \min(\frac{(k+2)p}{2},\frac{k+2}{2k}).\]
We use this inequality in a bound on
\[\frac{\partial \log\circ G}{\partial n}(k,n,p)=\frac{2}{n}+\log(1+\frac{k}{n-k})+\log(p+(1-p)^{k+1}).\]
It is immediate that
\[\frac{\partial \log\circ  G}{\partial n}(k,n,p)\leq \frac{2}{n}+\frac{k}{n-k}-1+p+(1-p)^{k+1},\]
from which we deduce
\[\frac{\partial \log\circ  G}{\partial n}(k,n,p)\leq \frac{2}{n}+\frac{k}{n-k} - \min(\frac{(k+2)p}{2},\frac{k+2}{2k}).\]
If $\frac{k+2}{2k}$ is the smaller quantity, then it is straightforward to check that this bound is negative (using $n\geq 2k$).
If $\frac{(k+2)p}{2}$ is the smaller quantity, then using $p\geq n^{-1}(\log(n)+\log(\log(n)))$ and $n\geq 2k$, it is also routine to check that the bound is negative for large $k$.
Then $\frac{\partial \log\circ G}{\partial n}$ is always negative on $R$, so that the gradient is never zero on $R$.

Next we deal with the sloping boundary of $R$.
\begin{claim}
For all $k$ large enough and $n\geq 2k$, the function $G(k,n,p(n))$ is decreasing in $n$, where
\[p=p(n)=\frac{\log(n)+\log(\log(n))}{n}.\]
\end{claim}
First we compute the logarithmic partial derivative of $G(k,n,p(n))$ with respect to $n$:
\[\begin{split}
\frac{\partial}{\partial n}\log(G(k,n,p(n)))&= \log(\frac{n}{n-k})+\frac{2}{n}+\frac{2p'}{p}-\frac{kp'}{1-p}
\\&\quad
+\log(p+(1-p)^{k+1}))
\\&\quad
 + p'(n-k-2)\frac{1-(k+1)(1-p)^{k}}{p+(1-p)^{k+1}}.
\end{split}
\]
In computing upper bounds on this expression, we will freely assume that $k$ is large, and we will always assume that $n\geq 2k$.

Since $1-(k+1)^{-1/k}$ goes to $0$ more slowly than $p(2k)$ as $k\to\infty$, we may assume that $1-(k+1)(1-p)^{k}$ is negative.
We may assume that $p'$ is negative.
Further, note that $p+(1-p)^{k+1}\geq (1-p)^k$.
Then
\[p'(n-k-2)\frac{1-(k+1)(1-p)^{k}}{p+(1-p)^{k+1}}\leq (n-k-2)p'((1-p)^{-k}-1-k).\]
By logarithm rules and the inequality $\log(1+x)\leq x$, we may deduce
\[
\log(\frac{n}{n-k})\leq \frac{k}{n-k}\text{ and }\log(p+(1-p)^{k+1})\leq p((1-p)^{-k-1}-1-k).\]
Then we have
\begin{equation}\label{eq:derivbound}
\begin{split}
\frac{\partial}{\partial n}\log(G(k,n,p(n)))&\leq \frac{k}{n-k}+\frac{2}{n}+\frac{2p'}{p}-\frac{kp'}{1-p}
\\&\quad
+p((1-p)^{-k-1}-1-k)
\\&\quad
 + (n-k-2)p'((1-p)^{-k}-1-k).
\end{split}
\end{equation}

To process this expression, we start combining terms.
First of all, it is straightforward to show
\[\frac{2}{n}+\frac{2p'}{p}=2\frac{\log(n)+1}{\log(n)(\log(n)+\log(\log(n)))}\leq \frac{2}{\log(n)}.\]
Next, we note
\[\frac{k}{n-k}-k(p+np')=\frac{k^2}{n(n-k)}-\frac{k}{n\log(n)}.\]
Since
\[p'\leq -\frac{\log(n)}{n^2},\]
we have 
\[\frac{k}{n-k}-k(p+(n-k-2)p')\leq \frac{k^2}{n(n-k)}-\frac{k}{n\log(n)}-\frac{k(k+2)\log(n)}{n^2}.\]
Next we note that 
\[\frac{-kp'}{1-p}\leq \frac{2k\log(n)}{n^2}.\]
Since the $-k(k+2)\log(n)/n^2$ dominates the positive terms, we may deduce that for any positive constant strictly less than $1$, say $1/2$, we have
\begin{equation}
\label{eq:mainbound}
\frac{k}{n-k}-k(p+(n-k-2)p')+\frac{2}{n}+\frac{2p'}{p}-\frac{kp'}{1-p}\leq -\frac{k}{n\log(n)}-\big(\frac{1}{2}\big)\frac{k^2\log(n)}{n^2}.
\end{equation}
The remaining terms in our bound from Equation~\eqref{eq:derivbound} may be written as:
\begin{equation}\label{eq:extraterms}(p+(n-k-2)p')((1-p)^{-k-1}-1)-pp'(n-k-2)(1-p)^{-k-1}.\end{equation}
Since we assume that $p<1/2$, we know $-2\log(2)p<\log(1-p)$.
Since $(1-p)^{-k-1}$ is decreasing in $n$ for fixed $k$, we may get an upper bound by plugging in for $n=2k$.
Then
\[(1-p)^{-k-1}\leq (2k\log(2k))^{\log(2)\frac{k+1}{k}}\leq 2k^{\frac{3}{4}}.\]
This gives us an immediate upper bound on the second term from Equation~\eqref{eq:extraterms} as follows, using the obvious bound $p\leq 2\log(n)/n$:
\[-pp'(n-k-2)(1-p)^{-k-1}\leq \frac{8k^{\frac{3}{4}}(\log(n))^2}{n^2}.\]
Next we bound the first term in Equation~\eqref{eq:extraterms}.
Since $(1-p)^{k+1}\geq 1-(k+1)p$, we deduce that 
\[(1-p)^{-k-1}-1\leq (k+1)p(1-p)^{-k-1}\leq \frac{4(k+1)k^{\frac{3}{4}}\log(n)}{n}.\]
Then
\[
\begin{split}
(p+(n-k-2)p')((1-p)^{-k-1}-1) &\leq
\frac{8(k+1)k^{\frac{3}{4}}\log(n)}{n^2} \\
&\quad +\frac{8(k+1)(k+2)k^{\frac{3}{4}}\log(n)^2}{n^3}.
\end{split}
\]
Using our assumption that $n\geq 2k$, we may then deduce that
\[
\begin{split}
(p+(n-k-2)p')((1-p)^{-k-1}-1)& \leq \frac{8(k+1)k^{\frac{3}{4}}\log(n)}{n^2} \\
&\quad +\frac{8(k+1)\log(k)k^{\frac{3}{4}}\log(n)}{n^2}.
\end{split}
\]
Then the $-k^2\log(n)/n^2$ term from Equation~\eqref{eq:mainbound} eventually (for large $k$ and all $n\geq 2k$) dominates everything from Equation~\eqref{eq:extraterms}, and the claim follows.

\begin{claim}
For large fixed $k$, the minimum value of $G(k,n,p)$ in $R$ is realized at one of the corners of $R$: 
\[(n,p)=(2k,2/5)\text{ or }(n,p)=(2k,(2k)^{-1}(\log(2k)+\log(\log(2k)))).\]
\end{claim}
We have shown that the gradient of $\log\circ G$ never vanishes on $R$ and $G$ is decreasing on the sloping boundary of $R$.
In showing that the gradient never vanishes, we showed that $\frac{\partial\log\circ G}{\partial n}$ is negative on the boundary segment $p=2/5$.
This means that the maximum cannot occur on this boundary segment (away from the corner).
Now we consider the boundary segment where $n=2k$.
We consider
\[\frac{\partial \log\circ G}{\partial p}=\frac{2}{p}-\frac{k}{1-p}+(n-k-2)\frac{1-(k+1)(1-p)^k}{p+(1-p)^{k+1}}.\]
When $n=2k$, this is the function
\[p\mapsto \frac{2}{p}-\frac{k}{1-p}+(k-2)\frac{1-(k+1)(1-p)^k}{p+(1-p)^{k+1}}.,\]
where $(2k)^{-1}(\log(2k)+\log(\log(2k)))\leq p\leq 2/5$.
We define a function $f\co ((2k)^{-1}(\log(2k)+\log(\log(2k))),2/5)\to\R$ to be this function with denominators cleared:
\[f(p)= (2(1-p)-kp)(p+(1-p)^{k+1})+(k-2)p(1-p)(1-(k+1)(1-p)^k).\]
Computations show:
\[f''(p)=-4k+(1-p)^{k-2}g(k,p),\]
where $g$ is some polynomial in $k$ and $p$,
and
\[f'''(p)=(1-p)^{k-3}(2 k-5 k^3-3 k^4+(-2 k+7 k^3+6 k^4+k^5) p-(2 k^3+3 k^4+k^5) p^2).\]
A computation shows that $f'''(p)$ is positive if 
\[\frac{-2+2 k+3 k^2}{k^2 (2+k)} < p < 1,\]
so for large $k$, $f'''(p)$ is positive on the entire domain of $f$.
Then $f''(p)\leq f''(2/5)$, for $p$ in the domain of $f$.
It is easy to see that $f''(2/5)\sim -4k$ as $k\to\infty$, so that $f''(2/5)$ is negative for large $k$ and $p$ in the domain of $f$.
Then $f$ is concave down.
Since $f(2/5)\sim \frac{2}{25}k$ as $k\to\infty$, this means that $f$ changes sign at most once.
Then $G$ is decreasing-then-increasing as $p$ increases along the boundary segment of $R$ with $n=2k$.
In particular, this proves the claim.

Finally, we can prove the proposition.
We note that
\[\lim_{k\to\infty}\frac{G(k+1,2k+2,2/5)}{G(k,2k,2/5)}=24/25<1,\]
and 
\[\lim_{k\to\infty}\frac{G(k+1,2k+2,p(2k+2))}{G(k,2k,p(2k))}=0,\]
where $p(n)=n^{-1}(\log(n)+\log(\log(n)))$.
Then the sequence of values of $G$ at each corner of $R$ is eventually dominated by an exponentially decreasing function.
\end{proof}

\section*{Acknowledgments}
I was an undergraduate participant in the 2001 SUNY Potsdam--Clarkson summer REU, where I was involved with one research project on random graphs and another project on graph products of groups.
This REU was apt preparation for a project like this paper, and I am grateful to Christino Tamon and Kazem Mahdavi for their supervisory roles in those projects.
Thanks to Hanna Bennett for conversations and notes on an earlier version of this paper.
I am grateful to Niranjan Balachandran, Ruth Charney, Helge Kr\"uger and Michael Mendenhall for conversations.

\noindent
California Institute of Technology\\
Mathematics 253-37\\
Pasadena, Ca 91125\\
E-mail: {\tt mattday@caltech.edu}
\medskip

\end{document}